\newtheorem{theorem}{Theorem}[section]
\newtheorem{lemma}[theorem]{Lemma}
\newtheorem{proposition}[theorem]{Proposition}
\newtheorem{conjecture}[theorem]{Conjecture}
\newtheorem{maintheorem}{Theorem}
\theoremstyle{definition}
\newtheorem{remark}[theorem]{Remark}
\theoremstyle{remark}
\numberwithin{equation}{section}
\newcommand{\Z}{\mathbb Z}
\newcommand{\R}{\mathbb R}
\newcommand{\eps}{\varepsilon}
\newcommand{\eval}[2][\right]{\relax
  \ifx#1\right\relax \left.\fi#2#1\rvert}
\begin{document}
\title[Accessibility classes]{Structure of accessibility classes}

\author[Jana Rodriguez Hertz]{Jana Rodriguez Hertz}
\address{Jana Rodriguez Hertz,\newline
1. Department of Mathematics, 
Southern University of Science and Technology of China.
No 1088, xueyuan Rd., Xili, Nanshan District, Shenzhen,Guangdong, China 518055.}
\address{2. SUSTech International Center for Mathematics}
\email{rhertz@sustech.edu.cn}

\author[C. H. V\'asquez]{Carlos H. V\'asquez}
\address{Carlos H. V\'asquez, Instituto de Matem\'atica,
Pontificia Universidad Cat\'olica de Valpara\'{\i}so. Blanco Viel 596,
Cerro Bar\'on, Valpara\'{\i}so-Chile.} \email{carlos.vasquez@pucv.cl}

\begin{thanks}{CV  was supported by  Proyecto Fondecyt 1171427.}
\end{thanks}
\begin{thanks}
{JRH was partially supported by NSFC 11871262 and NSFC 11871394} 
\end{thanks}
\keywords{ Accesibility, Partially hyperbolic diffeomorphisms, stable ergodicity}

\begin{abstract}
In this work we deal with dynamically coherent partially hyperbolic diffeomorphisms whose central direction is two dimensional. We prove that in general the accessibility classes are topologically immersed manifolds. If, furthermore, the diffeomorphism satisfies certain bunching condition, then the accessibility classes are immersed $C^{1}$-manifolds.  
\end{abstract}
\maketitle
\tableofcontents

\section{Introduction}
Let $M$ be a Riemannian closed manifold and let $f:M\to M$ be a partial hyperbolic diffeomorphism with an invariant splitting
$$TM=E^{s}\oplus E^{c}\oplus E^{u}.$$ 
Here  $E^{s}$ and  $E^{u}$  are uniformly hyperbolic bundles contracting and expanding, respectively, while  vectors in $E^{c}$ are neither contracted as strongly as the vectors in $E^{\rm s}$ nor expanded as the vectors in $E^{u}$. See the precise definition in Section \ref{section.preliminaries}.  It is well known that partial hyperbolicity is a $C^1$-open property.  Also it is known that 
there are unique invariant foliations $\mathcal{F}^{s}$ and $\mathcal{F}^{u}$ tangent to $E^{s}$ and $E^{u}$ respectively \cite{BP1974,HPS} but in general, $E^{c}$, $E^{cu} = E^{c} \oplus E^{u}$, and $E^{cs} = E^{c} \oplus E^{s}$ do not integrate to foliations, not even when $E^{c}$ is one-dimensional (see \cite{HHU2016}). A partially hyperbolic diffeomorphism is {\em dynamically coherent} if there are two invariant foliations: $\mathcal{F}^{cu}$ tangent to  $E^{c}\oplus E^{u}$, and $\mathcal{F}^{cs}$ tangent to $E^{c}\oplus E^{s}$. If $f$ is dynamically coherent, then there is an invariant foliation $\mathcal{F}^{c}$ tangent to $E^{c}$ (just take the intersection of $\mathcal{F}^{cs}$ and $\mathcal{F}^{cu}$). \par

We  say that a point $y\in M$  is $su$-{\em accessible  from} $x\in M$ if there exists a path $\gamma:I\to M$, from now on an $su$-{\em path}, piecewise contained   in the leaves of the strong stable and strong unstable foliations. This defines an equivalence relation on $M$. We denote by $AC(x)=\{y\in M: y\mbox{ is  }su-\mbox{accessible from }x\}$ the {\em accessibility class}  of $x$. A diffeomorphism has the {\em accessibility property} if there is  a unique accessibility class. We will mainly be working in the universal cover of $M$. When no confusion arises, $AC(x)$ will be denoting the accessibility class of $x$ in the universal cover which is not necessary the same as the preimage of $AC(x)$ by the covering projection from the universal cover.

The aim of this work is to describe the geometry of the accessibility classes:
 
 \begin{maintheorem}\label{mteo:class.manifold} If $f:M\to M$ is a dynamically coherent partially hyperbolic diffeomorphism with a two-dimensional center bundle, then the accessibility classes are topological immersed manifolds (see below). \par
 Moreover, if $AC(x)$ is not a codimension-one accessibility class, then either:
\begin{enumerate}
 \item $AC(x)$ is open, or else
 \item $AC(x)$ is a codimension-two immersed $C^{1}$-manifold. 
\end{enumerate}
\end{maintheorem}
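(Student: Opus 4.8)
The plan is to present each accessibility class as the orbit of the holonomy pseudogroup generated by $\mathcal{F}^{s}$ and $\mathcal{F}^{u}$, to equip this orbit with local charts built from stable and unstable plaques, and to read the trichotomy off a single integer: the number of central directions produced by short $su$-loops. I would work in the universal cover, where $\mathcal{F}^{s}$ and $\mathcal{F}^{u}$ are globally transverse continuous foliations with uniformly $C^{1}$ leaves \cite{HPS} and have a local product structure. Fix $x$, write $L=AC(x)$, and set $s=\dim E^{s}$, $u=\dim E^{u}$, so that $\dim M=s+u+2$. Since $L$ is saturated by the leaves of both strong foliations, for every $p\in L$ the full leaves $\mathcal{F}^{s}(p)$ and $\mathcal{F}^{u}(p)$ lie in $L$; in particular any reasonable tangent space at $p$ contains $E^{s}_{p}\oplus E^{u}_{p}$.

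The first substantive step is to construct local charts for $L$. Near $p$ the points of $L$ are reached by concatenating stable and unstable plaques, so the first-order reachable directions are exactly $E^{s}_{p}$ and $E^{u}_{p}$; genuinely new directions appear only at second order, through the failure of stable and unstable holonomies to commute along short $su$-loops based at $p$. Because $\dim E^{c}=2$, the span of these second-order directions is a subspace $V_{p}\subseteq E^{c}_{p}$ with central rank $d(p):=\dim V_{p}\in\{0,1,2\}$, and the reachable set is, near $p$, a locally Euclidean piece of dimension $s+u+d(p)$ modeled on
\[
E^{s}_{p}\oplus E^{u}_{p}\oplus V_{p},
\]
with the parametrization (stable plaque, unstable plaque, loop holonomy) a homeomorphism onto its image. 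Carrying this out rigorously — controlling the $su$-loops to second order and verifying that they generate precisely a $d(p)$-dimensional central piece rather than something wilder — is delicate.

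To promote $L$ to a genuine manifold I would show that $d$ is constant, exploiting the homogeneity of an accessibility class: the holonomy pseudogroup acts transitively on $L$, and a pseudogroup element carrying $p$ to $q$ should induce a homeomorphism between the reachable neighborhoods of $p$ and of $q$ inside $L$. Invariance of topological dimension then gives $d(p)=d(q)$, and the equivariance $f(AC(x))=AC(f(x))$, together with the lower semicontinuity of the central rank, rules out any remaining jumps; hence $d$ is constant and $L$ is an injectively immersed manifold of dimension $s+u+d$. Realizing these pseudogroup elements as honest local homeomorphisms of $L$ — since a priori they are only holonomies between transversals, whose derivatives need not preserve the splitting $E^{s}\oplus E^{c}\oplus E^{u}$ — is, together with the second-order chart construction, the part I expect to be the main obstacle.

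Finally I would read off the dichotomy. If $L$ is not a codimension-one class then $d\neq 1$, so $d\in\{0,2\}$. When $d=2$ the local model is all of $T_{p}M$ and $L$ is open, case (1). When $d=0$ the local model is $E^{s}_{p}\oplus E^{u}_{p}$, so $L$ has codimension two and is simultaneously saturated by the two transverse strong foliations, whose leaves are $C^{1}$. Since $E^{s}_{p}\oplus E^{u}_{p}$ is complementary to $E^{c}_{p}$, I would write $L$ locally as the graph of a map $\varphi$ from an open set of $(E^{s}\oplus E^{u})_{p}$ into $E^{c}_{p}$; projecting the stable and unstable leaves of $L$ yields two transverse continuous foliations of the domain with $C^{1}$ leaves, and $\varphi$ is $C^{1}$ along each of them. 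Journ\'e's lemma then promotes $\varphi$ to a jointly $C^{1}$ map, so $L$ is a $C^{1}$-immersed codimension-two manifold, case (2). This last step uses only the $C^{1}$ regularity and transversality of the strong leaves, so neither dynamical coherence nor bunching is needed in the codimension-two case; those hypotheses enter only for codimension-one classes, where the extra central direction in $V_{p}$ is not filled by a strong foliation and must instead be controlled through center foliations.
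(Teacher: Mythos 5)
Your outline hinges on a single unproved assertion: that near every $p$ the class is a locally Euclidean set modeled on $E^{s}_{p}\oplus E^{u}_{p}\oplus V_{p}$, where $V_{p}\subseteq E^{c}_{p}$ is a linear subspace spanned by ``second-order'' directions of short $su$-loops. You flag this step as delicate, but it is more than delicate: it is the entire content of the theorem, and the route you propose toward it is not available. The bundles $E^{s},E^{c},E^{u}$ and the strong foliations are in general only H\"older continuous, and the stable/unstable holonomies are mere homeomorphisms, so there is nothing to Taylor-expand to second order, no bracket to compute, and no well-defined linear span $V_{p}$ of ``new directions''. In fact the theorem itself shows that such a linear model cannot exist in general: in the codimension-one case the class is only a topological manifold, and producing a tangent line inside $E^{c}$ (i.e.\ $C^{1}$-regularity) requires the dynamical coherence and bunching hypotheses of Theorem~\ref{mteo:differentiability.AC}. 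A proof that assigns a tangent subspace $V_{p}$ at every point therefore proves too much.

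What the paper does instead --- and what is missing from your proposal --- is a purely topological use of $\dim E^{c}=2$. One intersects the class with a two-dimensional disc $D_{x}$ transverse to $E^{s}\oplus E^{u}$ and studies $F_{\eps}(x)=\pi^{us}((\pi^{su})^{-1}(x))$ and $AC^{D}(x)=\cc(AC(x)\cap D_{x},x)$. Lemma~\ref{lema.arco.conexo} shows these sets are path-connected, with connecting paths varying continuously with the base point, and Lemma~\ref{c.homogeneous} (the rigorous version of your transitive-pseudogroup step, built by composing holonomies between transversal discs) makes the intersection topologically homogeneous. The key step, which has no counterpart in your outline, is continuum theory in the plane: if the intersection contains a simple triod, the continuous family of paths forces every point of a nearby open region to have its class meet $AC(x)$, so $AC(x)$ is open; and a path-connected subset of a $2$-disc containing no simple triod must be a point, an arc, or a circle. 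This trichotomy (Theorems~\ref{teo.tricotomia} and~\ref{teo.tricotomia.ac}) is what replaces your local-model claim, and it is where the two-dimensionality of the center enters irreducibly --- not through a count of infinitesimal directions. Your treatment of the codimension-two case (joint integrability plus Journ\'e) does agree with the paper's corollary, but it only becomes usable after the trichotomy has been established by these topological means.
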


Similars conclusion as above was obtained in \cite{FRH2005} and \cite{HS2017} in more restrictive setting than ours.

A topological immersed manifold is the image in $M$ of a topological manifold $N$ by a topological immersion $\phi$, that is by a continuous map such that for each point $x\in N$ there exists a neighborhood $U\subset N$ for which $\phi|_{U}$ is a homeomorphism onto its image. \par
With some additional hypotheses we can also obtain regularity of the codimension-one accessibility classes. 

\begin{maintheorem}\label{mteo:differentiability.AC} Let $f:M\to M$ be a dynamically coherent diffeomorphism with two-dimensional center bundle. Then all accessibility classes are injectively immersed $C^{1}$-submanifolds if any of the two following conditions hold:
\begin{enumerate}
\item $f$ is $C^{2}$ and satisfies the center bunching condition (\ref{bunching.condition}), 
\item  $f$ is $C^{1+\alpha}$ and satisfies the strong center bunching condition (\ref{strong.bunching.condition}).
\end{enumerate}
\end{maintheorem}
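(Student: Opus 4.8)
The plan is to reduce to the only genuinely new case and then to exhibit a continuously varying tangent plane field along the class. By \thmref{mteo:class.manifold} every accessibility class is already an immersed manifold, and those that are open or of codimension two are $C^1$; so it suffices to treat a codimension-one class $\Lambda = AC(x)$. Its tangent ``plane field'' must contain $E^s$ and $E^u$ together with a single line $\ell \subset E^c$, and showing that $\Lambda$ is $C^1$ amounts to showing that this line field $\ell$ (equivalently the full distribution $E^s \oplus \ell \oplus E^u$) varies continuously along $\Lambda$. Since $f$ permutes accessibility classes, $Df(\ell_y)=\ell_{f(y)}$, so $\ell$ is a $Df$-invariant line field inside the two-dimensional bundle $E^c$, defined over the $f$-invariant union of codimension-one classes.

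First I would organize $\Lambda$ using the invariant foliations provided by dynamical coherence. Inside a center-unstable leaf the trace $\Lambda \cap \mathcal{F}^{cu}$ is a strong-unstable-saturated hypersurface with tangent $\ell \oplus E^u$, and symmetrically $\Lambda \cap \mathcal{F}^{cs}$ has tangent $E^s \oplus \ell$; projecting along $\mathcal{F}^{u}$ (resp. $\mathcal{F}^{s}$) onto a center leaf, each trace collapses to a central curve tangent to $\ell$. Thus the local geometry of $\Lambda$ is that of a central curve swept out by strong stable and strong unstable leaves. The strong stable and unstable leaves are as regular as $f$, so the two things that must be controlled are (i) the regularity of the central curve, and (ii) the regularity with which the sweeping moves it, i.e. the regularity of the strong stable holonomy between center-unstable leaves and of the strong unstable holonomy between center-stable leaves.

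Point (ii) is where the bunching hypotheses enter: the Burns--Wilkinson mechanism shows that under the center bunching condition (for $C^2$ maps) or under the strong center bunching condition (for $C^{1+\alpha}$ maps) these $su$-holonomies between center leaves are uniformly $C^1$, with derivatives depending continuously on basepoints. Granting $C^1$ holonomies, I would build a chart for $\Lambda$ near a point $p$ in coordinates (strong stable)$\,\times\,$(strong unstable)$\,\times\,$(central-curve parameter), and check that passing to neighbouring center-stable and center-unstable leaves is $C^1$ precisely because it is realized by these holonomies. Injectivity of the resulting immersion is then inherited from the immersed-manifold structure already given by \thmref{mteo:class.manifold}.

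The main obstacle is point (i): proving that the invariant central line field $\ell$, a priori only continuous, is regular enough that its integral curves in the center leaves are $C^1$ and glue $C^1$-ly to the $su$-directions. I would treat $\ell$ as a $Df$-invariant section of the projectivized bundle $P(E^c)$ and set up the associated graph transform on candidate central directions; the center bunching inequality is exactly what makes this graph transform a fibre contraction in the $C^1$ topology, so that its unique invariant section is $C^1$ and coincides with $\ell$. In the $C^2$ case ordinary center bunching suffices, whereas in the $C^{1+\alpha}$ case the strong center bunching condition is needed to absorb the merely Hölder regularity of $Df$ along $E^c$ into the contraction estimate. Once $\ell$ is known to be $C^1$, combining it with the $C^1$ holonomies of the previous step produces a continuously varying tangent distribution $E^s \oplus \ell \oplus E^u$ along $\Lambda$, which is what is needed to conclude that $\Lambda$ is an injectively immersed $C^1$ submanifold.
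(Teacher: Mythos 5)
Your reduction to the codimension-one case and your point (ii) are sound and match the paper: the $C^1$ regularity of stable/unstable holonomies restricted to center-stable/center-unstable leaves under bunching (Theorem \ref{PSW}) or strong bunching (Theorem \ref{brown}) is exactly the input the paper uses, and the final gluing via Journ\'e-type arguments along transverse foliations (Theorem \ref{journe}, applied first inside center-unstable leaves and then against $\mathcal{F}^{s}$, following Wilkinson) is also how the paper concludes.

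The genuine gap is your point (i). First, the setup is circular: after Theorem \ref{mteo:class.manifold} a codimension-one class is only a \emph{topological} immersed manifold, so it has no well-defined tangent line field $\ell\subset E^{c}$ to begin with; the existence of $\ell$ is part of what must be proved, not a starting datum for a graph transform. Second, and decisively, the claim that center bunching makes the graph transform on sections of $P(E^{c})$ a fibre contraction is false. The bunching condition (\ref{bunching.condition}) compares the hyperbolic rates $\nu,\hat\nu$ with the spread of the center rates $\gamma,\hat\gamma^{-1}$; it says nothing about a dominated splitting \emph{inside} $E^{c}$, which is what a contraction of the projectivized action would require. For instance, if $Df|_{E^{c}}$ is conformal (or an isometry), bunching holds trivially, yet the induced map on $P(E^{c})$ is an isometry on fibres: there is no contraction, no uniqueness of invariant sections, and certainly no $C^1$ section theorem available. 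So your mechanism for regularizing the central direction cannot work as stated. The paper's route around this is the idea your proposal is missing: the homeomorphisms of Lemma \ref{c.homogeneous}, restricted to center leaves, \emph{are} the holonomy maps, hence $C^1$ under the hypotheses (Proposition \ref{homogeneous}); therefore the trace of the class in a center leaf is a locally compact, $C^1$-\emph{homogeneous} set, and the theorem of Repov\v{s}--Skopenkov--\v{S}\v{c}epin \cite{RSS1996} then forces it to be a $C^1$ curve, with no invariant-section machinery at all. Once the central trace is $C^1$ and the holonomies moving it are $C^1$, the Journ\'e steps you describe go through.
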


Theorem \ref{mteo:differentiability.AC} together with Proposition \ref{lower.semicontinuity} partially answer Problem 16 in \cite{HHU2007}

\begin{conjecture}[\cite{HHU2007}, Problem 16] Prove that the accessibility classes are topological manifolds that vary semi-continuously, as well as their dimensions. Prove that, with bunching, they are indeed smooth manifolds. 
\end{conjecture}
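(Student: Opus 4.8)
The plan is to deduce Theorem~\ref{mteo:differentiability.AC} from Theorem~\ref{mteo:class.manifold} by upgrading the regularity in the single case that Theorem~\ref{mteo:class.manifold} leaves open, using that the (strong) center bunching hypotheses force the strong stable and strong unstable holonomies to be $C^1$. Indeed, Theorem~\ref{mteo:class.manifold} already asserts that every accessibility class is an immersed manifold, and that a class which is not codimension-one is either open or a codimension-two $C^1$-immersed submanifold. In both of those cases the conclusion of Theorem~\ref{mteo:differentiability.AC} is immediate (open sets are trivially $C^1$ submanifolds, and the codimension-two case is already $C^1$). Hence the entire problem reduces to showing that a codimension-one accessibility class $AC(x)$ is an injectively immersed $C^1$ submanifold, and this is exactly where dynamical coherence and bunching will be used.

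First I would pin down the local geometry of a codimension-one class using dynamical coherence, which supplies the foliations $\F^{cs}$, $\F^{cu}$ and $\F^c = \F^{cs}\cap\F^{cu}$. Since the center bundle is two-dimensional, a codimension-one class has dimension $\dim E^s + \dim E^u + 1$; as it contains the full strong stable and strong unstable leaves through each of its points but cannot contain an entire center leaf (otherwise it would be open), the intersection $AC(x)\cap\F^c(y)$ is, near a point $y$, a one-dimensional center curve $c_y$. The strategy is then to realize $AC(x)$ locally as the image of $c_y$ under the $su$-holonomy pseudogroup: starting from points of $c_y$ and sweeping along strong stable and strong unstable leaves should parametrize a full neighborhood of $y$ in $AC(x)$, so that the tangent space to $AC(x)$ at each point is $E^s\oplus E^u\oplus L$ for a one-dimensional line field $L\subset E^c$.

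The decisive input is regularity of holonomies. Hypothesis (1) of Theorem~\ref{mteo:differentiability.AC} ($f\in C^2$ with center bunching) and hypothesis (2) ($f\in C^{1+\alpha}$ with strong center bunching) are precisely the two settings in which the stable holonomy between center-unstable leaves and the unstable holonomy between center-stable leaves are $C^1$, with derivative varying continuously with the base point. Granting this, the local sweeping map built from $c_y$ is $C^1$ of maximal rank, the line field $L$ is continuous, and $AC(x)$ is a $C^1$ immersed submanifold carrying the continuous tangent distribution $E^s\oplus E^u\oplus L$. Injectivity of the immersion then follows from the fact that the points being glued all belong to one accessibility class, so the intrinsic leaf topology maps one-to-one onto $AC(x)$ as a subset, the local graph description ruling out self-intersections.

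The step I expect to be the main obstacle is the second one, namely controlling the one-dimensional center direction inside a codimension-one class. A priori it is not clear that $AC(x)\cap\F^c(y)$ is a $C^1$ curve, nor that the stable and unstable holonomies applied to it assemble into a single coherent chart of the expected rank rather than merely a topological one; one must also check that the line field $L$ obtained from different center leaves is consistent. Reconciling these requires using the $su$-saturation and the codimension-one constraint together with the $C^1$ holonomies simultaneously, and it is exactly here that the bunching hypotheses do the real work, converting the merely continuous immersed structure of Theorem~\ref{mteo:class.manifold} into a genuine $C^1$ one.
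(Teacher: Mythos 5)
Your reduction is sound as far as it goes: Theorem~\ref{mteo:class.manifold} does dispose of the open and codimension-two cases, and you correctly identify the $C^1$ regularity of stable/unstable holonomies restricted to center-(un)stable leaves (Theorems~\ref{PSW} and~\ref{brown}, i.e.\ \cite{PSW1997} and \cite{brown2016}) as the decisive input. But the proposal stops exactly where the theorem's content lies, and you say so yourself: you never establish that $AC(x)\cap \F^{c}(y)$ is a $C^{1}$ curve, and you never justify that the ``sweeping map'' is jointly $C^{1}$ of maximal rank. These are not reconciliation details to be deferred; they are the proof. The paper fills the first gap with a tool your proposal does not invoke: the theorem of \cite{RSS1996} that a locally compact, $C^{1}$-homogeneous subset of $\R^{n}$ is a $C^{1}$ submanifold. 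The mechanism is that the local homeomorphisms of Lemma~\ref{c.homogeneous}, when the transverse discs are taken inside center leaves $W^{c}_{\eps}(x)$, \emph{are} the stable/unstable holonomy maps restricted to center-stable/center-unstable leaves, hence $C^{1}$ under either bunching hypothesis; so the connected component of $AC(x)\cap W^{c}_{\eps}(x)$ is $C^{1}$-homogeneous and \cite{RSS1996} upgrades it to a $C^{1}$ submanifold with no direct analysis of the curve at all. Without this (or a substitute), your line field $L\subset E^{c}$ has no regularity and the chart you propose cannot even be formulated.

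The second gap is of the same nature. Knowing the holonomies are $C^{1}$ leafwise gives regularity of the (candidate) immersion of $BC(x)$ along the leaves of $\F^{c}$, $\F^{u}$ and $\F^{s}$ \emph{separately}; since these foliations are in general only H\"older transversally, separate leafwise $C^{1}$-ness does not formally yield joint $C^{1}$-ness of a map assembled from them. This is precisely what Journ\'e's theorem (Theorem~\ref{journe}, \cite{J1988}) is for, and the paper applies it twice: first inside each center-unstable leaf (smoothness along $\F^{c}$ and $\F^{u}$ gives smoothness along $\F^{cu}$, Proposition~\ref{journe.cu}), then to the transverse pair $\F^{s}$, $\F^{cu}$ to conclude. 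Your assertion that ``the local sweeping map built from $c_y$ is $C^1$ of maximal rank'' is exactly the statement Journ\'e's argument is needed to prove, not a consequence of the hypotheses you have granted. Finally, note that the conjecture also asks that classes and their dimensions vary semi-continuously; the paper answers this with Proposition~\ref{lower.semicontinuity} (a direct consequence of Lemma~\ref{c.homogeneous}), a part of the statement your proposal does not address.
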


Theorem \ref{mteo:differentiability.AC}  also gives positive evidence of the following conjecture by Wilkinson:

\begin{conjecture}[Wilkinson \cite{W2013}, Conjecture 1.3] Let $f:M\to M$ be $C^{r}$, partially hyperbolic and $r$-bunched. Then the accessibility classes are injectively immersed $C^{r}$-submanifolds of $M$.
\end{conjecture}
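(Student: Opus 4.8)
The plan is to realize each accessibility class as the orbit of a point under the pseudogroup generated by motion along strong stable and strong unstable leaves, to upgrade the regularity of the generating holonomies from $r$-bunching, and to assemble these into a single $C^{r}$ immersed submanifold by a Journ\'e-type bootstrap. The paper's Theorem~\ref{mteo:class.manifold} and Proposition~\ref{lower.semicontinuity} already supply, in the two-dimensional-center case, the immersed-manifold structure and the semicontinuity of dimension that the conjecture asserts in general, so I regard the topological part as the model to be extended and concentrate on the regularity.

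First I would fix $x$ and describe $AC(x)$ via the pseudogroup $\Gamma$ whose generators are the local leaf-flows along $\mathcal{F}^{s}$ and $\mathcal{F}^{u}$ together with the stable and unstable holonomies between nearby transversals. Since each leaf is an injectively immersed $C^{r}$ manifold, motion within a fixed leaf is automatically $C^{r}$, and $AC(x)$ is exactly the $\Gamma$-orbit of $x$. A Sussmann--Stefan orbit-theorem argument adapted to this pseudogroup should give that the orbit is an injectively immersed submanifold whose tangent space at each point is the span of the distributions generated by $E^{s}$ and $E^{u}$ and their iterated holonomy images; this simultaneously produces the immersion, its injectivity, and the local constancy of the dimension.

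The real obstruction is that $E^{s}\oplus E^{u}$ is in general only H\"older as a distribution on $M$: although the leaves are $C^{r}$, the foliations $\mathcal{F}^{s}$ and $\mathcal{F}^{u}$ vary only H\"older-continuously in the transverse direction, so no $C^{r}$ vector fields span $E^{s}$ and $E^{u}$ on an open set and the orbit theorem cannot be applied verbatim. This is precisely where $r$-bunching enters. I would invoke the $C^{r}$ invariant-section theorem of Hirsch--Pugh--Shub \cite{HPS}, in the form developed by Pugh--Shub--Wilkinson: under $r$-bunching the stable holonomies between $\mathcal{F}^{u}$-leaves, the unstable holonomies between $\mathcal{F}^{s}$-leaves, and the $su$-round-trip holonomies are all $C^{r}$. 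Thus, even though the ambient sum-distribution is not smooth, the holonomy maps that actually coordinatize the orbit transverse to its leaves are $C^{r}$, and $AC(x)$ acquires an atlas of $C^{r}$ holonomy charts along the stable and along the unstable directions separately.

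The heart of the matter is then to glue these separate regularities into a joint $C^{r}$ parametrization of $AC(x)$. Locally the class is swept out by $\mathcal{F}^{s}$- and $\mathcal{F}^{u}$-plaques through the points already reached, so a local parametrization is uniformly $C^{r}$ along the stable leaves and uniformly $C^{r}$ along the unstable leaves; a Journ\'e-type lemma---any map that is uniformly $C^{r}$ along two transverse $C^{r}$ foliations is jointly $C^{r}$---should then upgrade it to a $C^{r}$ immersion. The main obstacle, and the reason the statement is still only a conjecture, is the uniformity required to run this bootstrap globally: when $\dim AC(x)$ exceeds $\dim E^{s}+\dim E^{u}$ the class is reached only through $su$-paths of unbounded combinatorial length, so controlling the $C^{r}$ holonomy estimates uniformly along arbitrarily long paths---and matching them across the jumps in dimension detected by Proposition~\ref{lower.semicontinuity}---is the delicate point on which the general proof hinges.
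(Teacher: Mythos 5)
There is a fundamental mismatch here: the statement you were given is not a theorem of the paper at all. It is Wilkinson's Conjecture 1.3, quoted verbatim, and the paper offers no proof of it --- only ``positive evidence'' in the special case $\dim E^{c}=2$, where Theorem~\ref{mteo:differentiability.AC} yields $C^{1}$ (not $C^{r}$) injectively immersed classes under center bunching or strong bunching. So your proposal cannot be measured against a proof in the paper; it must stand on its own, and it does not: your final paragraph concedes that the uniformity needed to run the Journ\'e bootstrap along $su$-paths of unbounded combinatorial length is ``the delicate point on which the general proof hinges.'' Conceding the key step is conceding there is no proof --- which is the correct state of affairs, since the statement is open.

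Beyond that self-identified hole, two of your intermediate steps would fail as written. First, a Sussmann--Stefan orbit theorem requires the generating local flows to be at least $C^{1}$ on an open set; since $E^{s}\oplus E^{u}$ is only H\"older transversally (as you note), no such pseudogroup of $C^{1}$ local diffeomorphisms of $M$ exists, and what replaces the orbit theorem in the paper is entirely hand-made topology: the set $F_{\eps}(x)$, the homogeneity homeomorphisms of Lemma~\ref{c.homogeneous}, and the trichotomies of Theorems~\ref{teo.tricotomia} and \ref{teo.tricotomia.ac}, which exclude simple triods by a separation argument. That machinery is special to $\dim E^{c}=2$, because it rests on one-dimensional continua theory inside a $2$-disc; even the topological part of Theorem~\ref{mteo:class.manifold} that you propose to ``extend'' has no known analogue for higher center dimension. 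Second, bunching gives regularity of stable/unstable holonomies only \emph{restricted to center-stable and center-unstable leaves} (Theorems~\ref{PSW} and \ref{brown}); this is exactly how the paper proceeds --- the map $h$ of Lemma~\ref{c.homogeneous} taken inside $W^{c}_{\eps}(x)$ is the restricted holonomy, giving $C^{1}$-homogeneity (Proposition~\ref{homogeneous}), then the theorem of \cite{RSS1996} gives a $C^{1}$ submanifold, and Journ\'e is applied twice, first along $\mathcal{F}^{c}$ and $\mathcal{F}^{u}$ inside center-unstable leaves and then along $\mathcal{F}^{s}$ and $\mathcal{F}^{cu}$. Your ``atlas of $C^{r}$ holonomy charts'' between arbitrary transversals is not supplied by $r$-bunching, so the charts you invoke do not exist in the generality you need; and note that this route also requires dynamical coherence, which your sketch nowhere assumes.
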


Accessibility  is a key notion in the program of Pugh and Shub \cite{PS1995,PS2000} to prove that stable ergodicity is $C^r$-dense among volume preserving partially hyperbolic diffeomorphisms, $r\geq 2$.  Pugh and Shub also  conjectured the stable accessibility is dense among the $C^r$-partially hyperbolic diffeomorphisms, volume preserving or not, $r\geq 2$. In the case $\dim E^{\rm c}=1$, the accessibility property is always stable \cite{Didier2003}, and $C^\infty$-dense among the volume preserving diffeomorphisms \cite{HHU2008}. Even in the case of one-dimensional center bundle  the same result was proved in \cite{BHHTU2008} for non conservative diffeomorphisms. In the case $\dim E^{\rm c}=2$,  Avila and Viana \cite{AV} proved recently under bunching condition that if $f$ is accesible, then it is stable accesible. Without any hypothesis on the dimension of the central bundle,  Dolgopyat and Wilkinson \cite{DW2003} proved that stable accessibility is $C^1$- dense in the space of $C^r$-diffeomorphisms (any $r\ge 1$). \par

\noindent\textbf{Acknowledgment:} We want to thank R. Ures for helpful comments. We want to especially thank A. Wilkinson for a key remark, see Remark \ref{remark}. Also we would like to thank the referees for their valuable comments which helped to improve the manuscript.

\section{Preliminaries} \label{section.preliminaries}

Let $M$ be a Riemannian closed manifold. $f:M\to M$ is a {\em partially hyperbolic diffeomorphism} if there is a non trivial $Df$-invariant splitting of the tangent bundle
$$TM=E^{s}\oplus E^{c}\oplus E^{u}$$ 
and there are continuous positive functions defined  on $M$ satisfying for every $x\in M$:
$$\mu(x)<\nu(x)<\gamma(x)<\hat\gamma(x)^{-1}<\hat\nu(x)^{-1}<\hat\mu(x)^{-1}$$
with 
$$\max(\nu(x),\hat\nu(x))<1,$$ 
such that for every unit vector $v\in T_xM$
\begin{eqnarray}
\mu(x)<\|Df(x)v\|<\nu(x), & &\mbox{ for every } v\in E^{s}(x), \label{phcontraccion}\\
\gamma(x)<\|Df(x)v\|<\hat\gamma(x)^{-1}& & \mbox{ for every } v\in E^{c}(x),\label{phcenter}\\
\hat\nu(x)^{-1}<\|Df(x)v\|<\hat\mu(x)^{-1} ,& &\mbox{ for every } v\in E^{u}(x). \label{phexpansion}
\end{eqnarray}

The inequalities  \eqref{phcontraccion} and \eqref{phexpansion}  above mean that  $E^{s}$ and  $E^{ u}$  are uniformly hyperbolic bundles (contracting and expanding, respectively) while \eqref{phcenter}  means  vectors in $E^{c}$ are not contracted as strongly as the vectors in $E^{ s}$ nor expanded as strongly as the vectors in $E^{u}$.  It is well known that partial hyperbolicity is a $C^1$-open property. \par

We will say that $f$ satisfies the {\em center bunching condition} if the following holds: 
\begin{equation}\label{bunching.condition}
\max\{\nu(x),\hat\nu(x)\}\leq \gamma(x)\hat\gamma(x).
\end{equation}
When $f$ is a $C^2$ partially hyperbolic diffeomorphism dynamically coherent, then the (stable/unstable) holonomy maps are smooth restricted to the center stable/unstable leaves \cite{PSW1997}. Recently,  Brown \cite{brown2016} proved that the same statement hold if $f$ is  a   $C^{1+\alpha}$ partially hyperbolic diffeomorphism satisfying an stronger bunching condition. More precisely, take $0<\beta<\alpha$ such that 
$$\nu(x)\gamma(x)^{-1}<\mu(x)^{\beta}\quad\mbox{and}\quad \hat\nu(x)\hat\gamma(x)^{-1}<\hat\mu(x)^{\beta}.$$
we will say that $f$ satisfies the {\em strong bunching condition} if for some $0<\theta<\beta<\alpha$ such that 
$$\nu(x)^{\alpha}\gamma(x)^{-\alpha}\leq\nu(x)^{\theta}\quad\mbox{and}\quad \hat\nu(x)^{\alpha}\hat\gamma(x)^{-\alpha}\leq\hat\nu(x)^{\theta}$$
we have 

\begin{equation}\label{strong.bunching.condition}
 \max\{\nu(x),\hat\nu(x)\}^{\theta}\leq \gamma(x)\hat\gamma(x).
\end{equation}
 
\section{Proof of Theorem \ref{mteo:class.manifold}}

A subset $K\subset \mathbb{R}^n$ is said to be  \emph{ topologically homogeneous} if for every pair of points $x,y \in K$ there exist neighborhoods $U_x, U_y \subset  \mathbb{R}^n$ of $x$ and $y$ respectively and a homeomorphism $\varphi:U_x\to U_y$, such that $\varphi(U_x \cap K)=U_y \cap K$, and $\varphi(x)=y$, respectively. When $\varphi$ can be chosen a $C^r$-diffeomorphim, $r\geq1$, we say that  $K\subset \mathbb{R}^n$ is  \emph{$C^r$-homogeneous}. Let $K$ be a locally compact subset of $ \mathbb{R}^n$. \par

The following result holds for dynamically coherent partially hyperbolic diffeomorphisms with any center dimension. 

\begin{lemma}\label{c.homogeneous}
Let $x,y\in AC(x)$, and two center discs $D_{x}, D_{y}$ tangent to $E^{c}$ and centered, respectively, at $x$ and $y$. Then there exist two center discs $x\in U_{x}\subset D_{x}$ and $y\in U_{y}\subset D_{y}$ and a homeomorphism $h:U_{x}\to U_{y}$ such that  $h(x)=y$ and $h(\xi)\in AC(\xi)$ for all $\xi\in U_{x}$.
\end{lemma}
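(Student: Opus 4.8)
The plan is to build $h$ by concatenating the strong-stable and strong-unstable holonomy maps that run parallel to a fixed $su$-path joining $x$ to $y$, and then to upgrade the resulting continuous injection to a homeomorphism by invariance of domain. Since $y\in AC(x)$, fix an $su$-path $\gamma$ from $x$ to $y$ and record its breakpoints $x=p_{0},p_{1},\dots,p_{k}=y$, where the segment from $p_{i-1}$ to $p_{i}$ lies in a single leaf of $\mathcal{F}^{s}$ or of $\mathcal{F}^{u}$. I would transport the disc $D_{x}$ along $\gamma$ one segment at a time, letting the disc ``float''.

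If the $i$-th segment is stable, I take the strong-stable holonomy between full $(\dim E^{c}+\dim E^{u})$-dimensional transversals at $p_{i-1}$ and $p_{i}$ and restrict it to the current $c$-disc $D_{i-1}$. Because the holonomy of a continuous foliation is a homeomorphism between transversals, its restriction to a subdisc transverse to $\mathcal{F}^{s}$ is a homeomorphism onto its image $D_{i}$, a $c$-disc through $p_{i}$; unstable segments are handled symmetrically. The composition $h=\mathrm{hol}_{k}\circ\cdots\circ\mathrm{hol}_{1}$, followed by a final short stable-then-unstable holonomy onto the prescribed disc $D_{y}$, is continuous, sends $x$ to $y$, and, since every elementary move slides points along a stable or an unstable leaf, sends each $\xi$ to a point reachable from $\xi$ by an $su$-path, so that $h(\xi)\in AC(\xi)$.

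Once $h$ is known to be a continuous injection of an open neighborhood of $x$ in the $c$-manifold $D_{x}$ into the $c$-manifold $D_{y}$, Brouwer's invariance of domain shows that $h$ is an open map, hence a homeomorphism onto an open neighborhood $U_{y}$ of $y$ in $D_{y}$; restricting to $U_{x}:=h^{-1}(U_{y})$ then yields the desired pair of discs and homeomorphism, with the accessibility statement automatic from the construction. Note that this route only ever produces a \emph{topological} conclusion, consistent with the ``immersed manifold'' (rather than smooth) assertion of the theorem, precisely because the holonomies are merely homeomorphisms.

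The hard part—indeed the only place where something can fail—is keeping each evolving disc $D_{i-1}$ transverse to the foliation used at the next step, equivalently keeping each elementary holonomy injective on it; this is exactly what makes the composite $h$ injective. Here the subtlety is genuine: because $\mathcal{F}^{s}$ and $\mathcal{F}^{u}$ need not be jointly integrable, a stable holonomy need not carry $\mathcal{F}^{u}$-transversals to $\mathcal{F}^{u}$-transversals, so transversality to $E^{s}\oplus E^{u}$ is not preserved for free, and short $su$-configurations can in principle connect distinct points of a transversal. I would confine the whole construction to a single small foliated chart around $\gamma$ and control the transverse position of the successive images using the continuity of the bundles $E^{s},E^{c},E^{u}$ together with a cone-field argument around $E^{c}$ (or, failing that, perturb $\gamma$ within the class of $su$-paths from $x$ to $y$ so that the associated holonomic images meet the relevant foliations transversally). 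I expect this transversality/injectivity step to be the main technical obstacle; once it is in place, everything else is formal.
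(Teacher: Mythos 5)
Your plan stalls exactly where you say it does, and that step is not an outsourceable technicality: it is the entire content of the lemma, and the route you chose makes it essentially unfixable. After one elementary move, the image $D_{1}$ of the $C^{1}$ disc $D_{x}$ under stable holonomy is only a topological (in general merely H\"older) disc: holonomies of the invariant foliations are not differentiable, so $D_{1}$ has no tangent planes, no cone field around $E^{c}$ can be propagated through it, and ``transversality to $\mathcal{F}^{u}$'' has no usable meaning for it. Consequently the next move can fail to be well defined or injective: two points $\xi',\eta'\in D_{1}$ lying on one local unstable plaque correspond to a short $s$-$u$-$s$ connection between \emph{distinct} points of $D_{x}$, and nothing in your setup excludes such configurations; nor can they be removed by perturbing $\gamma$, since the same problem recurs for every nearby $su$-path and at every scale --- this is a structural feature of non-joint-integrability, not a genericity issue. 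So both repairs you propose (cone fields, perturbing the path) are non-starters, and the proposal as written is not a proof.

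The paper's argument is arranged precisely so that this problem never appears. It reduces by induction to a single leg, say $y\in W^{s}_{\eps}(x)$, and it never transports a disc by holonomy. Both $D_{x}$ and $D_{y}$ are \emph{fixed} transverse $C^{1}$ discs, and $h$ is a two-legged projection between them: for $\xi\in U_{x}$ one takes the point $w\in W^{s}_{\eps}(\xi)\cap W^{u}_{\eps}(D_{y})$ (unique, by local product structure) and sets $h(\xi)=W^{u}_{\eps}(w)\cap D_{y}$. Every set occurring here is a plaque saturation of a fixed $C^{1}$ transverse disc, so the needed uniqueness, continuity and openness are local-product-structure statements about fixed, geometrically controlled objects rather than about holonomy images; moreover the inverse is exhibited explicitly as the symmetric $u$-then-$s$ projection, so even your appeal to invariance of domain is unnecessary. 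Since $\xi\to w\to h(\xi)$ is an $su$-path, $h(\xi)\in AC(\xi)$ is immediate, and the general case follows by fixing a transverse $c$-disc at every breakpoint of the $su$-path and composing the two-legged projections between consecutive \emph{fixed} discs. If you want to salvage your write-up, that substitution --- two-legged projections between pre-chosen transverse discs in place of a ``floating'' disc pushed by single-foliation holonomies --- is the missing idea.
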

\begin{proof}
Let $x$ and $y$ be on the same $W^{s}_{\eps}(x)$, the argument then extends by applying the same argument a finite number of times, since for any pair of points $x,y$ in the same accessibility class there is a path $x_{0}=x,x_{1},\dots,x_{n}=y$ such that $x_{i}\in W^{s}_{\eps}(x_{i-1})$ or  $x_{i}\in W^{u}_{\eps}(x_{i-1})$, $i=1,\dots,n$. The composition of the $n$ local homeomorphisms gives us the desired local homeomorphism.\par 
Take $D_{x}$ and $D_{y}$, where $y\in W^{s}_{\eps}(x)$, then for any $\xi$ in a suitable $U_{x}\subset D_{x}$, there is a unique $w\in W^{s}_{\eps}(\xi)\cap W^{u}_{\eps}(D_{y})$. Let $h(\xi)=W^{u}_{\eps} (w)\cap D_{y}$. Clearly, $h(\xi)\in AC(\xi)$. Also, continuity of $\xi\mapsto W^{s}_{\eps}(\xi)$ and $\xi\mapsto W^{u}_{\eps}(\xi)$ implies that $h$ is continuous and open. The inverse of $h$ is defined analogously, taking $W^{u}_{\eps}$ instead of $W^{s}_{\eps}$ and viceversa. Therefore $h$ is a local homeomorphism. 
\end{proof}

\begin{remark}\label{remark}
The dynamically coherent hypothesis is essential in Lemma \ref{c.homogeneous}. If $f$ is not dynamically coherent, and we take any $c$-dimensional discs $D_{x}$, $D_{y}$ transverse to $E^{s}\oplus E^{u}$, then $W^{u}_{\eps}(D_{y})$ is not necessarily a manifold and there is no guarantee, a priori, that there is a unique point $w\in W^{s}_{\eps}(\xi)\cap W^{u}_{\eps}(D_{y})$.
\end{remark}

For every $\eps>0$ sufficiently small, in a sufficiently small ball $B(x)$ around $x$, and $D_{x}$ a 2-dimensional center disc, the following is well defined $\pi^{s}:B(x)\to W^{u}_{\eps}(D_{x})$, such that $\pi^{s}(y)=W^{s}_{\eps}(y)\cap W^{u}_{\eps}(D_{x})$. Analogously we define $\pi^{u}$. It is easy to verify that $\pi^{s}$ and $\pi^{u}$ are open maps.
Let us call $\pi^{su}=\pi^{s}\circ\pi^{u}:B(x)\to D_x$ and $\pi^{us}=\pi^{u}\circ\pi^{s}$ . Define:
$$F_{\eps}(x)=\{y\in D_{x}: (\pi^{us})^{-1}(y)\cap(\pi^{su})^{-1}(x)\ne\emptyset\}$$

Note that equivalently $F_{\eps}(x)=\pi^{us}((\pi^{su})^{-1}(x))$. Moreover, $F_{\eps}(x)$ is arc-connected (see Lemma \ref{lema.arco.conexo} below). We say that
$W^{s}$ and $W^{u}$ are {\em jointly integrable} at $x$ if $F_{\eps}(x)=\{x\}$. 

\begin{proposition}\label{teo.tricotomia}
If $\dim E^{c}=2$, then for each $x\in M$ and sufficiently small $\eps>0$, one of the following holds:
\begin{enumerate}
 \item $F_{\eps}(x)$ is a point,
 \item $F_{\eps}(x)$ is the injective image of a segment,
 \item the accessibility class of $x$, $AC(x)$, is open.
\end{enumerate}
\end{proposition}

\begin{remark}\label{Feps.pueden.ser.distintos}
{\em A priori} there is nothing that precludes the three different situations described in Proposition \ref{teo.tricotomia} above from happening in the same accessibility class. It is not known whether this is possible. 
\end{remark}

 See \cite{FRH2005} for the following lemma, we include it here with some suitable changes. 
 
\begin{lemma} \label{lema.arco.conexo}
Given $x\in M$, for any $y\in F_{\eps}(x)$, $y\ne x$, there exists an open disc $D'\subset D_{x}$ containing $x$ and a continuous $\psi: D'\times [0,1]\to D_{x}$ such that $\psi(x,0)=x$, $\psi(x,1)=y$ and $\psi(z,[0,1])\subset F_{\eps}(z)$.
\end{lemma}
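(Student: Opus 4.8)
The plan is to exploit the fact that $F_{\eps}(z)$ is, by its very definition, the continuous image under $\pi^{us}$ of the fibre $(\pi^{su})^{-1}(z)$, and to recognise this fibre as a local strong-$su$-leaf that varies continuously with the base point $z$. The first step is to unwind the two holonomies. Since $z\in D_{x}$ lies in the center-unstable and center-stable discs used to define the projections, one checks $\pi^{s}(z)=z$ and $\pi^{u}(z)=z$, whence $\pi^{su}(z)=\pi^{us}(z)=z$; in particular $z\in F_{\eps}(z)$. Reading off the preimages gives $(\pi^{s})^{-1}(z)=W^{s}_{\eps}(z)$ and therefore
\[
(\pi^{su})^{-1}(z)=\bigcup_{q\in W^{s}_{\eps}(z)}W^{u}_{\eps}(q),
\]
the local $su$-leaf through $z$. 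For $\eps$ small this is an embedded disc of dimension $\dim E^{s}+\dim E^{u}$, and the transversality and continuity of the strong foliations provide a continuous parametrization $\Psi(z,a,b)$, defined for $z\in D'$ and $(a,b)$ in a small box of $\R^{\dim E^{s}}\times\R^{\dim E^{u}}$, with $\Psi(z,0,0)=z$ and $\Psi(z,\cdot,\cdot)$ a homeomorphism onto $(\pi^{su})^{-1}(z)$.

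With this coordinate in hand the construction is immediate. Given $y\in F_{\eps}(x)$, the identity $F_{\eps}(x)=\pi^{us}((\pi^{su})^{-1}(x))$ furnishes a witness $w\in(\pi^{su})^{-1}(x)$ with $\pi^{us}(w)=y$; write $w=\Psi(x,a_{0},b_{0})$. I would then fix any continuous path $t\mapsto(a(t),b(t))$ in the parameter box joining $(0,0)$ to $(a_{0},b_{0})$ and set
\[
\gamma(z,t)=\pi^{us}\bigl(\Psi(z,a(t),b(t))\bigr),\qquad (z,t)\in D'\times I.
\]
This $\gamma$ is continuous, being a composition of continuous maps. For every $z$ and $t$ the point $\Psi(z,a(t),b(t))$ lies in $(\pi^{su})^{-1}(z)$, so $\gamma(z,t)\in\pi^{us}((\pi^{su})^{-1}(z))=F_{\eps}(z)$, which is exactly the required containment $\gamma(z,I)\subset F_{\eps}(z)$ (and $F_{\eps}(z)\subset D_{x}$, so $\gamma$ takes values in $D_{x}$). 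At the base point, $\gamma(x,0)=\pi^{us}(\Psi(x,0,0))=\pi^{us}(x)=x$ and $\gamma(x,1)=\pi^{us}(w)=y$, as required; in fact $\gamma(z,0)=z$ for all $z\in D'$, which is slightly stronger than what is asked.

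The points that require genuine care, and where I expect the main obstacle, are the structural facts feeding the construction rather than the formal bookkeeping above. One must justify that the two transverse strong foliations furnish a bona fide continuous local product structure, i.e. that the leaves $(\pi^{su})^{-1}(z)$ are embedded discs depending continuously on $z$ and admitting the uniform parametrization $\Psi$; this rests on the continuity of $W^{s}$ and $W^{u}$ together with the smoothness and transversality of their leaves, and is where any real estimate is hidden. One also has to shrink $D'$ and choose $\eps$ so that, for all $z\in D'$ and all parameter values along the chosen path, the points $\Psi(z,a(t),b(t))$ remain inside the common ball $B(x)$ on which $\pi^{s}$ and $\pi^{u}$ are defined; since the witness $w$ sits at bounded distance from $x$ inside the $\eps$-local leaf, this is achieved simply by taking $D'$ small. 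Everything else is forced by the definition of $F_{\eps}$ as a holonomy image.
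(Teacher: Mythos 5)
Your proof is correct, and on the one point where real care is needed it is actually tighter than the paper's own argument. The paper proceeds dually to you: it takes the two-legged $su$-path $\eta$ from the witness $w$ to $y$ (the path realizing $\pi^{us}(w)=y$) and projects it into $D_x$ by $\pi^{su}$, then varies the base point by choosing, on a small disc $D_w$ transverse to $E^{s}_{w}\oplus E^{u}_{w}$, the unique $w'$ with $\pi^{su}(w')=z$ and the correspondingly perturbed paths $\eta'$. You instead move a point inside the fibre $(\pi^{su})^{-1}(z)$ --- which you correctly identify as the local $su$-set $\bigcup_{q\in W^{s}_{\eps}(z)}W^{u}_{\eps}(q)$ --- and push it forward by $\pi^{us}$. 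The payoff of your choice is that the key containment $\gamma(z,I)\subset F_{\eps}(z)$ becomes definitional, since $F_{\eps}(z)=\pi^{us}\bigl((\pi^{su})^{-1}(z)\bigr)$ and every point of your path is $\pi^{us}$ of a point lying over $z$. Under the paper's choice this containment is \emph{not} definitional: a point $\zeta=\pi^{su}(\eta(t))$ on the projected path carries the witness $\eta(t)$ itself, which shows $y\in F_{\eps}(\zeta)$ (hence $\zeta\in AC(z)$, which is all the triod argument later uses), but not directly $\zeta\in F_{\eps}(x)$; to get the statement as literally claimed one should rather project the \emph{other} two-legged path (the one from $w$ to $x$ realizing $\pi^{su}(w)=x$) by the \emph{other} projection $\pi^{us}$ --- and that is precisely what your $\gamma$ is when the parameter path runs first in $a$ and then in $b$, since then the moving point traces the unstable segment from $\pi^{u}(w)$ to $w$ inside the fibre. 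The mechanism for continuity in the base point is the same in both proofs (continuity of the strong foliations, giving either the continuous family of witnesses $w'$ or your parametrization $\Psi(z,a,b)$), and the caveats you flag --- the local product structure underlying $\Psi$, and shrinking $D'$ and $\eps$ to stay in the domain of the projections --- are exactly the standard points that the paper leaves implicit.
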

 
\begin{proof}
 Take $y\in F_{\eps}(x)$, hence there exists $w\in (\pi^{us})^{-1}(y)\cap (\pi^{su})^{-1}(x)$. This implies that $\pi^{su}(w)=x$ and $\pi^{us}(w)=y$. Therefore, there exists a (two-legged) $su$-path in $M$ from $w$ to $y$, $\eta:[0,1]\to B(x)$. The projection $\pi^{su}\circ \eta$ gives a path in $D_{x}$ from $x$ to $y$ that is contained in $F_{\eps}(x)$. Take a disc $D_{w}$ transverse to $E^{s}_{w}\oplus E^u_{w}$. If $D'\subset D_{x}$ is sufficiently small, then for each $x'\in D'$ there is a unique $w'\in D_{w}$ such that $\pi^{su}(w')=x'$. By continuity of the stable and unstable foliations, we get close paths $\eta'$ for each $w'$, so we can choose a $\psi$ as in the statement. 
 \end{proof}

\begin{proof}[Proof of Proposition \ref{teo.tricotomia}] 
\begin{figure}[h]
\begin{tikzpicture}
\draw (0,0) circle (100pt);
\fill [color=gray] (-10pt,0pt) circle (20pt);
\fill [color=gray] (50pt,49pt) circle (20pt);
\draw (-100pt,0)..controls (-50pt, 20pt) and (20pt, -20pt).. (100pt,0);
\draw (-10pt,0)..controls (10pt,5pt)..(71pt, 71pt);
\draw (0,-50pt) node {$A$};
\draw (-20pt,50pt) node {$B_{1}$};
\draw (70pt,20pt) node {$B_{2}$};
\fill (-10pt,0pt) circle (2pt);
\draw (-10pt, -6pt) node {$x$};
\fill (50pt, 49pt) circle (2pt);
\draw (57pt, 40pt) node {$y$};
\draw (-10pt, 0pt) circle (20pt);
\draw (-15pt, -30pt) node {$D'$};
\draw (50pt,49pt) circle (20pt);
\draw (30pt, 70pt) node {$D''$};
\draw (70pt, -15pt) node {$\eta_{1}$};
\draw (80pt,80pt) node {$\eta_{2}$};
\end{tikzpicture}
\caption{\label{triod} A simple triod in $AC(x)\cap D$}
\end{figure}
From Lemma \ref{lema.arco.conexo}, it follows that $F_{\eps}(x)$ is path-connected.
If $F_{\eps}(x)$ is not a point, then it must contain an arc. Moreover, if $W^{s}$ and $W^{u}$ are not jointly integrable at $x$, then $F_{\eps}(x)$ always contains a segment that separates a small neighborhood of $x$. Indeed $\alpha=\pi^{u}(W^{s}_{\eps}(x))\subset F_{\eps}(x)$, and since $x$ is not an endpoint of $W^s_{\eps}(x)$ and $\pi^{u}$ is open, then $x$ is not an endpoint of the segment $\alpha$.\par

 Let us show that if $F_{\eps}(x)$ contains a simple triod, then $AC(x)$ must be open. A simple triod is a continuum homeomorphic to letter $Y$. If $F_{\eps}(x)$ contains a simple triod $Y$, then by Lemma \ref{c.homogeneous}, we may assume that $x$ is in the bifurcation point of the triod, as in Figure \ref{triod}; that is, there is an arc  $\eta_{1}$ separating a disc $U_{x}$ transverse to $E^{s}_{x}\oplus E^{u}_{x}$ into two connected components $A$ and $B$, and $\eta_{2}$ separating $B$ into two connected components $B_{1}$ and $B_{2}$, such that $\eta_{1}\cup \eta_{2}=Y\subset F_{\eps}(x)$, and such that $\overline{A}\cap \overline{B_{1}}\cap\overline{B_{2}}\supset\{x\}$. Notice that $\eta_{1}\cup \eta_{2}\subset AC(x)$. Let $y\in \overline{B_{1}}\cap \overline{B_{2}}$. By Lemma \ref{lema.arco.conexo}, there exists $\psi:D'\times [0,1]\to D_{x}$ such that $\psi(x,1)=y$ and $\psi(z,t)\subset F_{\eps}(z)\subset AC(z)$. For any center disc $D''$ centered at $y$, $D'$ can be chosen so that $\psi(D',1)\subset D''$. By continuity of $\psi$, and since we are in a plane disc, for all points $z$ in $A\cap D'$, we have $\psi(z,[0,1])\cap \eta_{1}\ne \emptyset$. This implies that $AC(z)\cap AC(x)\ne\emptyset$ for all $z\in A\cap D'$, hence $A\cap D'\subset AC(x)$. 
By continuity of $\pi^{su}$, and the fact that $D'$ is a 2-disc, $(\pi^{su})^{-1}(A\cap D')$ is an open set in $AC(x)$. By the center homogeneity of $AC(x)$ (Lemma \ref{c.homogeneous}), $AC(x)\cap D_{y}$ is open for every small disc transverse to $E^{s}_{y}\oplus E^u_{y}$, and hence $AC(x)$ is open. For another proof of the openness of $AC(x)$, see [HHU2008]. \par
This shows that the existence of a triod in $F_{\eps}(x)$ implies $AC(x)$ is open. Therefore, if $AC(x)$ is not open nor a point, $F_{\eps}(x)$ is a path connected, locally connected set without triods, therefore, it is the injective image of an arc. 
\par
\end{proof}

\begin{remark}\label{remark.triod} The proof above also establishes that, given a continuous map $\psi:D'\times[0,1]\to D_{x}$ as described in Lemma \ref{lema.arco.conexo}, if there are two points $y,z$ such that $\psi(y,[0,1])\cup\psi(z,[0,1])$ contains a simple triod $Y$, then $AC(y)=AC(z)$ is open. Note that we are not requiring that the $\psi$-image of close points form triods. This will follow from continuity of $\psi$ for some close points. The proof follows exactly as above. \end{remark}

%
%
%

For any disc center disc $D=D_x$ containing $x$, let
$$AC^{D}(x)=\text{cc}(AC(x)\cap D,x),$$
that is, the connected component of $AC(x)\cap D$ that contains $x$. For the sake of notation we will omit the dependence of $D$ from $x$ when this when this does not cause confusion.
We have the following :
\begin{proposition}\label{teo.tricotomia.ac}
For each $x\in M$ and any sufficiently small center disc $D$ containing $x$, one of the following holds:
\begin{enumerate}
 \item $AC^{D}(x)$ is a point,
 \item $AC^{D}(x)$ is the injective image of a segment,
 \item the accessibility class of $x$, $AC(x)$, is open.
\end{enumerate}
\end{proposition}

\begin{remark}
 Even though in principle we could have points $x,y,z\in AC(z)$ such that $F_{\eps}(x)=\{x\}$, $F_{\eps}(y)$ is an arc and $AC(z)$ is open, as stated in Remark \ref{Feps.pueden.ser.distintos}, all sets $AC^{D}(x), AC^{D}(y)$ and $AC^{D}(z)$ are homeomorphic if $x,y,z$ belong to the same accessibility class. This is a consequence of Lemma \ref{c.homogeneous}.\par
 Note that, while $F_{\eps}(x)$ is arc connected, this could be not the case, a priori, for $AC^{D}(x)$. This fact will follow only after the proof below. \end{remark}

\begin{proof}

If $AC^{D}(x)$ is a point then, by Lemma \ref{c.homogeneous}, $AC^{D}(\xi)$ is a point for every $\xi\in AC(x)$. This happens if and only if the stable and unstable foliations are jointly integrable at all points in $AC(x)$, that is, if $F_{\eps}(\xi)=\{\xi\}$ for every $\xi\in AC(x)$. (Moreover, in this case $AC(x)$ is a $C^{1}$-manifold by Journ\'ee's argument, see \cite{Didier2003}, \cite{BHHTU2008}). \par

So let us assume that $AC(x)$ is not open and $AC^{D}(x)$ is not a point. Then by the previous paragraph and Proposition \ref{teo.tricotomia}, there exists a point $\xi\in AC(x)$ such that $F_{\eps}(\xi)$ is the injective continuous image of a segment for some $\xi\in AC(x)$. By the the remark at the beginning of the proof of Proposition \ref{teo.tricotomia}, $\alpha= F_{\eps}(\xi)$ separates a small center disc $D$, centered at $\xi$. 


\begin{figure}[h]
\begin{tikzpicture}
\draw (0,0) circle (100pt);
\fill [color=gray] (-10pt,0pt) circle (20pt);
\draw (-100pt,0)..controls (-50pt, 20pt) and (20pt, -20pt).. (100pt,0);
\draw[color=red, line width=2pt] (-25pt,5pt)..controls (0pt, 7pt) and (25pt, -3pt)..(55pt, 0pt);
\draw (-10pt,0)..controls (0pt,30pt)..(71pt, 71pt);

\draw (5pt,-20pt) node {$A$};
\fill (-10pt,0pt) circle (2pt);
\draw (-10pt, -7pt) node {$\xi$};
\fill (-30pt,3pt) circle (2pt); 
\draw (-38pt,-5pt) node {$z_{0}$};
\fill (50pt, -6pt) circle (2pt);
\draw (57pt, -12pt) node {$y$};
\draw (-10pt, 0pt) circle (20pt);
\draw (-15pt, -30pt) node {$D'$};
\draw (-20pt,30pt) node {$A'$};
\draw (-115pt, 5pt) node {$\alpha$};
\draw (85pt,85pt) node {$K$};
\end{tikzpicture}
\caption{\label{figure2} $AC^{D}(\xi)$ separates $D$ into at least 3 connected components}
\end{figure}

Let us suppose that $AC^{D}(\xi)$ contains a point that is not contained in $\alpha$. $AC^{D}(\xi)$ might be very complex, and the complement of $AC^{D}(\xi)$ in $D$ could consist of infinitely many connected components. However, due to the previous remark, no matter how small we choose a center disc $D$ containing $\xi$, $AC^{D}(\xi)$ will be a continuum in $D$ which contains points outside $\alpha$. \par

Take $y\in \alpha\setminus \{\xi\}$. Then, due to Lemmas \ref{lema.arco.conexo} and \ref{c.homogeneous} 
there exist a neighborhood $D'$ of $\xi$ and $\psi:D'\times [0,1]\to D$ continuous such that $\psi(\xi,0)=\xi$, $\psi(\xi,1)=y$, and $\psi(z,[0,1])\subset AC(z)$ for all $z\in D'$. We lose no generality in assuming $D'$ is closed. \par

Let $A$ and $A'$ the the two connected components of $D'\setminus\alpha$.  Let $D''$ be a closed disc containing $\xi$ in its interior such that $D''$ contained in $D'$, and so that $\partial D'$ and $\partial D''$ are very close. Let $K=AC^{D''}(\xi)$. Suppose there exists $\eta\in A'\cap K$. 
Let $z_{0}\in\alpha\cap\partial D'$ be such that $\xi\in[z_{0},y]\subset \alpha$, as depicted in Figure \ref{figure2}. Note that, by Remark \ref{remark.triod} and continuity, $\xi\in\psi(z_{0},[0,1])\subset \alpha$, and $\psi(z_{0},1)$ is close to $y$. Then there exists $\eps>0$ such that for all $z\in B_{\eps}(z_{0})\cap A'\subset D'\setminus D''$, we have that $\psi(z,[0,1])\cap K\ne\emptyset$. Indeed, if $\eps>0$ is sufficiently small, then $\psi(z,[0,1])$ is arbitrarily close to $\alpha$, by continuity of $\psi$. If $\psi(z,[0,1])$ did not meet $K$, then it would separate the ball $D''$ into two connected components, one containing $\eta\in K$ and the other containing $\alpha\cap D''\subset K$, which is a contradiction, since $K$ is a continuum. Therefore, $B_{\eps}(z_{0})\cap A'\subset AC^{D'}(\xi)$, since $\psi(z,[0,1])\subset AC(z)$, and $AC(z)=AC(\xi)$ if they intersect. \par
Since $B_{\eps}(z_{0})\cap A'$ contains an open set, this implies $AC^{D}(z)$ is open for some $z\in AC(\xi)=AC(x)$ and a sufficiently small $D$ containing $z$. From Lemma \ref{c.homogeneous} we get that $AC^{D}(z)$ is open for every $z\in AC(x)$, and therefore $AC(x)$ is open. 
\end{proof}

Note that in Propositions \ref{teo.tricotomia} and \ref{teo.tricotomia.ac} the bi-dimensionality of $E^{c}$ is strongly used.

\begin{theorem}
$AC(x)$ is a topological immersed manifold for all $x$. Moreover, if, $AC^{D}(\xi)$ is a point for all $\xi\in AC(x)$, then $AC(x)$ is an immersed $C^{1}$-manifold.
\end{theorem}

\proof Due to the center homogeneity proved in Lemma \ref{c.homogeneous}, all points in an accessibility class $AC(x)$ satisfy either (1), (2) or (3) in  Proposition~\ref{teo.tricotomia.ac} above.  
If $AC(x)$ is open, then the statement is obvious. \par
If for all $\xi\in AC(x)$, $AC^{D}(\xi)$ is a point, then $W^{s}$ and $W^{u}$ are jointly integrable at $\xi$ for all $\xi\in AC(x)$. The proof that $AC(x)$ is a topological manifold in this case, can be found in \cite{HHU2008}, Lemma A.4.1. and the discussion above it. Even though the context of this lemma is for one-dimensional center bundle, its proof applies, since the only thing that it is used is the joint integrability, namely, that $\pi^{su}(AC_{x}(\xi))$ is just one point, where $AC_{x}(\xi)$ is the connected component of $AC(\xi)\cap B_{\eps}(x)$ containing $\xi$. This holds due to joint integrability. \par
To see that in this case we also have that the immersed manifold is $C^{1}$, observe that $W^{s}(\eta)$ and $W^{u}(\eta)$ are, restricted to $AC(x)$, continuous transverse foliations with uniformly smooth leaves. Journ\'e's argument \cite{J1988} (see also Theorem \ref{journe}) then implies that $AC(x)$ is an immersed $C^{1}$-manifold. \newline\par

Let us now assume that $AC^{D}(\xi)$ is a segment for all $\xi \in AC(x)$. Take a small ball around $x$ and foliate it by center discs. Call this foliated ball $B(x)$. We may assume that $B(x)$ is small enough that $\pi^{us}$ is well-defined, where $\pi^{us}:B(x)\to D_{x}$ is defined as above Proposition \ref{teo.tricotomia}, and $D_{x}\subseteq B(x)$ is the center disc containing $x$.  We will see that the connected component $AC^{B}(x)$ of $AC(x)\cap B(x)$ that contains $x$ is a topological manifold. 

For each $y\in W^{s}_{\eps}(x)$, let $h_{y}:U_{x}\to U_{y}$ be the homeomorphism given by Lemma \ref{c.homogeneous}, where $U_{x}\subset D_{x}, U_{y}$ are discs in the two-dimensional foliation chosen above. Note that $y\mapsto h_{y}$ is continuous in the $C^{0}$-topology. Also note that 
$$h_{y}(AC^{U_{x}}(x))=AC^{U_{y}}(y)=cc(AC(x)\cap U_{y}, y).$$ 

The set  $Y=\bigcup\{h_{y}(AC^{U_{x}}(x)):y\in W^{s}_{\eps}(x)\}$ is an $(s+1)$-topological manifold. It can be parametrized by $[-1,1]\times B^{s}_{\eps}(0)$, where $[-1,1]$ is a parameter for $AC^{U_{x}}(x)$. This topological manifold is contained in $W^{cs}(x)$, which is transverse to $E^{u}$. Now by taking the local unstable set of each point in $Y$, we obtain an $(s+u+1)$-topological manifold $Z$ parametrized by $[-1,1]\times B^{s}_{\eps}(0)\times B^{u}_{\eps}(0)$. We claim that $Z$ is $AC^{B}(x)$, where
$$AC^{B}(x)=cc(AC(x)\cap B(x), x).$$
If this were not the case, then $\pi^{us}(AC^{B}(x))$ would contain more than just $AC^{U_{x}}(x)$, and this would be a contradiction, since $\pi^{us}(AC^{B}(x))$ is a connected set containing $x$ and contained in $AC(x)\cap U_{x}$. 
\endproof

This finishes the proof of Theorem \ref{mteo:class.manifold}.


\section{Proof of Theorem \ref{mteo:differentiability.AC}}\label{section.theorem.B}


Throughout this section we will assume that $f$ is a dynamically coherent partially hyperbolic diffeomorphism. \newline\par

The following three results hold for any center dimension.
\begin{theorem}\cite{PSW1997}\label{PSW} Suppose that $f:M\to M$ is a $C^{2}$ partially hyperbolic diffeomorphism that is dynamically coherent and satisfies the center bunching condition (\ref{bunching.condition}). Then the local unstable and local stable holonomy maps are {\em uniformly} $C^{1}$ when restricted to each center unstable and each center stable leaves respectively.  
\end{theorem}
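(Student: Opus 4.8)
The plan is to prove the statement for the stable holonomy restricted to center-stable leaves; the assertion about the unstable holonomy on center-unstable leaves then follows by applying the same argument to $f^{-1}$, which interchanges the roles of $E^{s}$ and $E^{u}$. Fix a center-stable leaf $L$ (a leaf of $\mathcal F^{cs}$; it exists and is $C^{1}$ because $f$ is dynamically coherent and normally hyperbolic). Inside $L$ the two subfoliations $W^{s}$ and $\mathcal F^{c}$ induce a local product structure, so for two nearby local center transversals $\tau_{0},\tau_{1}\subset L$ the stable holonomy $h^{s}\colon\tau_{0}\to\tau_{1}$ sends $z$ to the unique point $W^{s}_{\mathrm{loc}}(z)\cap\tau_{1}$. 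I want to show $h^{s}$ is $C^{1}$ with a derivative bounded uniformly in $L$, in $z$, and in the choice of transversals.

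The engine of the proof is the \emph{equivariance} of the holonomy: since $f$ carries stable leaves to stable leaves and center leaves to center leaves, one has $f\circ h^{s}=h^{s}_{1}\circ f$, where $h^{s}_{1}$ is the stable holonomy between $f(\tau_{0})$ and $f(\tau_{1})$ inside $f(L)$. Iterating, $h^{s}=(f^{n}|_{L})^{-1}\circ h^{s}_{n}\circ (f^{n}|_{L})$, and since $f$ contracts stable distances at rate at most $\nu<1$ the transversals $f^{n}(\tau_{0}),f^{n}(\tau_{1})$ become exponentially close, so that $h^{s}_{n}$ approaches the identity in $C^{0}$. Differentiating the equivariance relation formally along $L$, and using $T_{z}\tau_{0}=E^{c}(z)$, $T_{h^{s}z}\tau_{1}=E^{c}(h^{s}z)$, the candidate derivative must satisfy the invariance equation
\[
Dh^{s}(z)=\big(Df|_{E^{c}}(h^{s}z)\big)^{-1}\,Dh^{s}_{1}(fz)\,Df|_{E^{c}}(z).
\]

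First I would turn this into a fixed-point problem. Over a compact piece of $L$, consider the bundle $\mathcal L$ whose fiber over $z$ is the space of linear maps $E^{c}(z)\to E^{c}(h^{s}z)$ with the operator norm, and define the graph transform $\Gamma$ on continuous sections of $\mathcal L$ by the right-hand side above (transporting a section of the image bundle back by the cocycle). The crucial point is that the center bunching condition \eqref{bunching.condition} is exactly what forces $\Gamma$ to be a fiber contraction: conjugation by $Df|_{E^{c}}$ multiplies operator norms by at most the center distortion $\|Df|_{E^{c}}\|\,\|Df^{-1}|_{E^{c}}\|$, which is controlled by $\gamma$ and $\hat\gamma$, while the motion of base points along stable leaves contributes the factor $\nu$ (respectively $\hat\nu$); \eqref{bunching.condition} makes the product strictly less than one. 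By the Fiber Contraction Theorem of Hirsch--Pugh--Shub there is then a unique continuous invariant section $\sigma_{\infty}$, and $\Gamma^{n}\sigma_{0}\to\sigma_{\infty}$ uniformly for every continuous $\sigma_{0}$. The resulting bounds are uniform in the base point and in the transversals, which yields the \emph{uniform} $C^{1}$ control claimed.

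The heart of the argument, and where the $C^{2}$ hypothesis is used, is to show that $\sigma_{\infty}$ is the genuine derivative of $h^{s}$ and not merely a formal solution of the invariance equation. For this I would set up the associated bundle map on $1$-jets, a point of whose total space records a base point together with a linear map, acting by $f$ on the base and by $\Gamma$ on the fiber; $C^{2}$-regularity of $f$ guarantees that this lift is itself $C^{1}$ and that its fiber derivative is again governed by \eqref{bunching.condition}. The $C^{1}$ Section Theorem then applies and identifies the attracting invariant section with the derivative of $h^{s}$, and continuity of $\sigma_{\infty}$ upgrades $h^{s}$ from differentiable to $C^{1}$. I expect the main obstacle to be precisely this last step: verifying rigorously that the candidate section is the derivative requires careful uniform estimates on the difference quotients, and uses essentially that $Df$ is $C^{1}$ along the leaves (the $C^{2}$ hypothesis) together with the Hölder dependence of $E^{c}$ on the base point. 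In the $C^{1+\alpha}$ setting this regularity is not available for free, and it is exactly here that the stronger bunching \eqref{strong.bunching.condition} of Brown is needed to run a modified version of the same contraction scheme.
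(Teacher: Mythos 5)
The first thing to note is that the paper does not prove this statement at all: Theorem \ref{PSW} is imported verbatim from \cite{PSW1997} as an external ingredient, so the only meaningful comparison is with the proof in that reference. Your outline does follow the broad strategy of that proof --- equivariance of the holonomy under $f$, a graph transform on candidate derivative sections, Hirsch--Pugh--Shub section-theorem technology, and bunching as the spectral condition --- so the approach is the right one in spirit.

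However, the pivotal step of your second paragraph has a genuine gap. The fiber maps of your graph transform, $A'\mapsto \bigl(Df|_{E^{c}}(h^{s}z)\bigr)^{-1}A'\,Df|_{E^{c}}(z)$, have fiber Lipschitz constant in the sup metric bounded only by the distortion $\|Df|_{E^{c}}\|\,\|(Df|_{E^{c}})^{-1}\|\leq(\gamma\hat\gamma)^{-1}$, which is in general $\geq 1$; the factor $\nu$ that you attribute to ``the motion of base points along stable leaves'' never multiplies this constant, because the sup distance between two sections is blind to where the base points move. So $\Gamma$ is not a fiber contraction, and the Fiber Contraction Theorem does not apply as invoked. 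The factor $\nu$ can only be harvested through the $C^{2}$ hypothesis: $Df|_{E^{c}}$ is Lipschitz, the orbit pairs $(f^{n}z,\,f^{n}h^{s}z)$ are $O(\nu_{n})$-close (where $\nu_{n}=\nu(z)\cdots\nu(f^{n-1}z)$), hence consecutive terms of the \emph{specific} sequence $\Gamma^{n}(\mathrm{Id})$ differ by $O\bigl(\nu_{n}\,\gamma_{n}^{-1}\hat\gamma_{n}^{-1}\bigr)$ (with $\gamma_{n},\hat\gamma_{n}$ the analogous products along the orbit), and center bunching is what makes this geometrically summable. That is a telescoping/Cauchy argument --- equivalently a contraction in a weighted norm along orbits --- not a sup-norm fiber contraction. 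This also shows that your division of labor between hypotheses is wrong: you assert that $C^{2}$ is needed only to identify $\sigma_{\infty}$ with the true derivative, but already the \emph{existence} of $\sigma_{\infty}$ requires $Df$ Lipschitz, together with control of the identifications between the nearby center spaces $E^{c}(f^{n}z)$ and $E^{c}(f^{n}h^{s}z)$ (H\"older regularity of $E^{c}$, or working intrinsically in the center-stable leaf). Those identifications, plus the difference-quotient step you explicitly defer, constitute the technical core of \cite{PSW1997} --- delicate enough that the original argument there needed later corrections by the same authors --- so the proposal as written breaks one step earlier than where you predicted the difficulty would be.
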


\begin{theorem}\cite{brown2016}\label{brown} Let $f:M\to M$ be a $C^{1+\alpha}$ partially hyperbolic diffeomorphism that is dynamically coherent that satisfies the strong bunching condition (\ref{strong.bunching.condition}) for some $0<\theta<\alpha$. 
 Then the local unstable and local stable holonomy maps are {\em uniformly} $C^{1+\theta}$ when restricted to the center unstable and center stable leaves respectively. 
\end{theorem}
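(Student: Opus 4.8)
The plan is to prove the statement for the stable holonomy restricted to a center-stable leaf; the unstable case follows verbatim by replacing $f$ with $f^{-1}$, which interchanges the roles of $\nu,\gamma$ with $\hat\nu,\hat\gamma$ and of $E^{s}$ with $E^{u}$. Fix a center-stable leaf $W^{cs}$; by dynamical coherence it is subfoliated by the strong stable leaves $W^{s}$ and by the center leaves $W^{c}$. For $p,q$ on a common strong stable leaf, the stable holonomy $H=H^{s}_{p,q}$ sends a small center disc $D^{c}_{p}$ to a center disc $D^{c}_{q}$ by $H(z)=W^{s}_{\eps}(z)\cap D^{c}_{q}$. Continuity of $H$, and of a tangent field for it, is immediate from continuity of the invariant foliations; the whole content of the theorem is upgrading this to a \emph{uniformly} $C^{1+\theta}$ statement. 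First I would record the basic estimates $d(f^{n}z,f^{n}z')\le C\,\nu(z)\cdots\nu(f^{n-1}z)\,d(z,z')$ along stable leaves, together with the conorm/norm bounds $\gamma^{(n)}\le m(Df^{n}|_{E^{c}})\le \|Df^{n}|_{E^{c}}\|\le(\hat\gamma^{(n)})^{-1}$ for the center cocycle, where $m(\cdot)$ denotes the conorm and the superscript $(n)$ denotes the Birkhoff product along the orbit.

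The engine of the proof is the invariance relation coming from the fact that $f$ permutes stable leaves and, by dynamical coherence, carries center discs to center discs:
\[
f\circ H^{s}_{p,q}=H^{s}_{fp,fq}\circ f .
\]
Iterating gives $H^{s}_{p,q}=f^{-n}\circ H^{s}_{f^{n}p,f^{n}q}\circ f^{n}$, so differentiating and restricting to the center direction,
\[
DH^{s}_{p,q}(z)=\bigl(Df^{n}|_{E^{c}}\bigr)^{-1}\circ DH^{s}_{f^{n}p,f^{n}q}(f^{n}z)\circ\bigl(Df^{n}|_{E^{c}}\bigr).
\]
Since $f^{n}p$ and $f^{n}q$ lie on a common stable leaf and $d(f^{n}p,f^{n}q)\to 0$, the holonomy $H^{s}_{f^{n}p,f^{n}q}$ converges to the identity under the natural identification of the two center discs, so $DH^{s}_{f^{n}p,f^{n}q}\to\mathrm{Id}$. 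The candidate derivative is therefore the limit of the linearized cocycle holonomies, and I would show this limit exists by a telescoping estimate in which the $n$-th correction term is bounded by the Hölder constant of $Df|_{E^{c}}$ times $d(f^{n}p,f^{n}q)^{\alpha}$ times the distortion $\|Df^{n}|_{E^{c}}\|\cdot\|(Df^{n}|_{E^{c}})^{-1}\|$. This is precisely where the bunching enters: the per-step factor $\max\{\nu,\hat\nu\}^{\alpha}\cdot\|Df|_{E^{c}}\|\cdot m(Df|_{E^{c}})^{-1}$ must contract, which is exactly the force of (\ref{strong.bunching.condition}).

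To obtain Hölder — not merely continuous — dependence of $z\mapsto DH^{s}_{p,q}(z)$, and uniformity over all pairs $(p,q)$, I would package the construction as a fiber-contraction/graph-transform argument in the spirit of Theorem~\ref{PSW}: over the bounded-geometry bundle whose fibers are candidate linear holonomies between center discs, the forward dynamics induces a bundle map that contracts the fibers at a rate governed by the center-bunching ratio $\max\{\nu,\hat\nu\}/(\gamma\hat\gamma^{-1})$, while the base moves with the Hölder exponent $\alpha$ of $Df$. Comparing the fiber-contraction rate against the base regularity produces a unique invariant section that is $\theta$-Hölder, with $\theta$ exactly the exponent isolated by the auxiliary inequalities $0<\theta<\beta<\alpha$ preceding (\ref{strong.bunching.condition}); this invariant section is $DH^{s}$, which yields the uniform $C^{1+\theta}$ conclusion.

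The main obstacle is the low regularity. In the $C^{2}$ case (Theorem~\ref{PSW}) the map $Df$ is Lipschitz, the base of the fiber contraction is $C^{1}$, and the classical Hirsch--Pugh--Shub section theorem applies directly to produce a $C^{1}$ derivative. When $f$ is only $C^{1+\alpha}$ the derivative cocycle is merely $\alpha$-Hölder, so the section theorem is unavailable in its standard form and one cannot simply differentiate the invariant section. The heart of the matter — and the reason the strong bunching (\ref{strong.bunching.condition}) rather than the plain bunching (\ref{bunching.condition}) is needed — is to carry out the telescoping Hölder estimates by hand, tracking how the $\alpha$-Hölder variation of $Df$ along orbits is absorbed by the center-bunched contraction and verifying that the losses sum to a convergent series with a definite exponent $\theta$. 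Establishing the bounds uniformly over the whole family of leaves, using the bounded geometry of the invariant foliations, is the remaining technical point.
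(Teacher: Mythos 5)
This statement is not proved in the paper at all: Theorem \ref{brown} is imported verbatim from \cite{brown2016} (just as Theorem \ref{PSW} is imported from \cite{PSW1997}), and the paper uses it purely as a black box in Section \ref{section.theorem.B}. So your proposal can only be measured against Brown's actual argument, and as a proof it has a genuine gap --- in fact a circularity. You obtain your candidate derivative by ``differentiating'' the conjugacy $H^{s}_{p,q}=f^{-n}\circ H^{s}_{f^{n}p,f^{n}q}\circ f^{n}$ and by asserting $DH^{s}_{f^{n}p,f^{n}q}\to\mathrm{Id}$; both steps presuppose that the holonomy maps are differentiable, which is precisely what is to be proved. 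The non-circular route defines the candidate derivative directly as a limit of linear maps $\bigl(Df^{n}|_{E^{c}(q)}\bigr)^{-1}\circ I_{n}\circ Df^{n}|_{E^{c}(p)}$, where $I_{n}$ is an a priori identification of nearby center spaces (parallel transport, or charts of uniformly bounded geometry); proves convergence of this sequence via the telescoping/bunching estimate; and then --- this is the hard core of the Pugh--Shub--Wilkinson scheme and of Brown's paper --- proves \emph{separately} that the holonomy is differentiable with exactly this limit as derivative, by comparing $H$ with its linearization at the scale $d(f^{n}p,f^{n}q)$ and pushing the error back by $f^{-n}$. Your sketch skips that step entirely: it simply declares ``this invariant section is $DH^{s}$.''

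Beyond the circularity, the feature that distinguishes Brown's theorem from the classical $C^{2}$ case is exactly the part you defer. You correctly note that for $f$ only $C^{1+\alpha}$ the Hirsch--Pugh--Shub section theorem is not available in its standard form and that the $\alpha$-H\"older variation of $Df|_{E^{c}}$ along orbits must be absorbed by the bunched contraction with a quantified loss, landing on the exponent $\theta$ from the chain $0<\theta<\beta<\alpha$; but the proposal only announces these estimates (``I would show \dots'', ``carry out the telescoping H\"older estimates by hand'') without performing them or even stating the per-step inequality whose summability is claimed to be ``exactly the force of (\ref{strong.bunching.condition}).'' Since this bookkeeping, done uniformly over all leaves and all pairs $(p,q)$, is the entire content of the theorem beyond Theorem \ref{PSW}, what you have is a correct outline of the known strategy rather than a proof.
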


Let $W^{c}_{\eps}(x)$ denote the $c$-disc of radius $\eps>0$ and center $x$ inside the leaf $W^{c}(x)$ tangent to $E^{c}$.
\begin{proposition}\label{homogeneous} If $f$ satisfies the hypothesis of Theorem \ref{mteo:differentiability.AC}, then for each $x\in M$, the connected component of $AC(x)\cap W^{c}_{\eps}(x)$ containing $x$ is $C^{1}$-homogeneous.
\end{proposition}

\begin{proof}
Let $x,y\in AC(x)$. In order to prove the proposition it suffices to show that the homeomorphisms $h$ defined in Lemma~\ref{c.homogeneous} are $C^{1}$ when $U_{x}$ and $U_{y}$ are taken inside $W^{c}_{\eps}(x)$ and $W^{c}_{\eps}(y)$ respectively. \par
Let us assume that $y\in W^{s}_{\eps}(x)$, then the argument extends by applying it a finite number of times. If $f$ is in the hypothesis of Theorem \ref{mteo:differentiability.AC} then either $f$ is under the hypothesis of Theorem \ref{PSW}, in which case the local stable holonomy map is $C^{1}$ when restricted to $W^{sc}_{\eps}(x)$, or $f$ is under the hypothesis of Theorem \ref{brown}, in which case the local stable holonomy map is $C^{1+\text{H\"older}}$ when restricted to $W^{sc}_{\eps}(x)$. In either case, the corresponding $h$, which is exactly the stable holonomy map restricted to $W^{sc}_{\eps}(x)$ is $C^{1}$. 
\end{proof}

The following Proposition requires that $\dim E^{c}=2$.

\begin{proposition} If $f$ is under  the hypothesis of Theorem \ref{mteo:differentiability.AC}, then for each $x\in M$, the connected component of $AC(x)\cap W^{c}_{\eps}(x)$ is a $C^{1}$-manifold. 
\end{proposition}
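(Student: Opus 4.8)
The plan is to combine Proposition~\ref{homogeneous} with the characterization obtained in \cite{RSS1996}, according to which a locally compact, $C^1$-homogeneous subset of $\mathbb{R}^n$ is precisely a $C^1$-submanifold of $\mathbb{R}^n$. Fix $x\in M$ and let $K$ be a connected component of $AC(x)\cap W^{c}_{\eps}(x)$. By Proposition~\ref{homogeneous} (applied with base point the basepoint of $K$), $K$ is $C^1$-homogeneous inside the center leaf $W^{c}_{\eps}(x)$. What remains is to pass to a Euclidean model so that the result of \cite{RSS1996} is applicable, and to verify the local-compactness hypothesis.

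For the Euclidean reduction I would argue as follows. Under dynamical coherence the center leaf $W^{c}_{\eps}(x)$ is a $C^1$ disc, being the intersection of $\mathcal{F}^{cs}$ and $\mathcal{F}^{cu}$ and tangent to $E^{c}$. Choose a $C^1$ chart $\varphi\colon W^{c}_{\eps}(x)\to\Omega\subset\mathbb{R}^{c}$. Since $C^1$-homogeneity is preserved under $C^1$-diffeomorphisms, and since being a $C^1$-submanifold is a local, chart-invariant property, it suffices to show that $\varphi(K)\subset\mathbb{R}^{c}$ is a $C^1$-submanifold; and by Proposition~\ref{homogeneous} the set $\varphi(K)$ is $C^1$-homogeneous in $\mathbb{R}^{c}$. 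Thus the only missing ingredient is that $\varphi(K)$ be locally compact.

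The local-compactness verification is the main obstacle, since the full intersection $AC(x)\cap W^{c}_{\eps}(x)$ need not be closed in the leaf, and one therefore cannot deduce local compactness directly. Instead I would extract it from the local structure of the connected component supplied by the trichotomy of Theorem~\ref{teo.tricotomia.ac}: because $E^{c}$ is transverse to $E^{s}\oplus E^{u}$, the center disc $W^{c}_{\eps}(x)$ is an admissible transverse disc, and the theorem shows that $K$ is either a single point, the injective continuous image of a segment or a circle, or a set with nonempty interior in $W^{c}_{\eps}(x)$ (the open case). In each of these three cases $K$ is locally closed in the center disc, hence locally compact, so $\varphi(K)$ is a locally compact subset of $\mathbb{R}^{c}$. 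With local compactness and $C^1$-homogeneity both established, \cite{RSS1996} gives that $\varphi(K)$ is a $C^1$-submanifold of $\mathbb{R}^{c}$, and pulling back through $\varphi$ we conclude that $K$ is a $C^1$-submanifold of $W^{c}_{\eps}(x)$, that is, a $C^1$-manifold, as claimed.
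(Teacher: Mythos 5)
Your proposal is correct and follows exactly the paper's route: the paper's own proof is simply that the statement ``follows immediately'' from Proposition~\ref{homogeneous} together with the theorem of \cite{RSS1996}. The only difference is that you explicitly verify the local-compactness hypothesis of \cite{RSS1996} (via the trichotomy of Theorem~\ref{teo.tricotomia.ac}) and the chart reduction to $\R^{n}$, both of which the paper leaves implicit; this is a welcome tightening rather than a different argument.
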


The proof of this proposition follows immediately from Proposition \ref{homogeneous} above and the following theorem:

\begin{theorem} \cite{RSS1996} Let $K$ be a locally compact (possibly non-closed) subset of $\R^{n}$. Then $K$ is $C^1$-homogeneous if and only if $K$ is a $C^1$-submanifold of $ \mathbb{R}^n$.  
\end{theorem}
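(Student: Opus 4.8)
The plan is to establish the two implications separately; the forward one (submanifold $\Rightarrow$ homogeneous) is routine, while the reverse carries essentially all the difficulty. For the forward implication, suppose $K$ is a $d$-dimensional $C^{1}$-submanifold and let $x,y\in K$. I would choose flattening charts, i.e. $C^{1}$-diffeomorphisms $\phi_{x}\colon U_{x}\to\R^{n}$ and $\phi_{y}\colon U_{y}\to\R^{n}$ of ambient neighborhoods with $\phi_{x}(x)=\phi_{y}(y)=0$ and $\phi_{\ast}(U_{\ast}\cap K)=\phi_{\ast}(U_{\ast})\cap(\R^{d}\times\{0\})$ for $\ast\in\{x,y\}$. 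After shrinking $U_{x}$ so that $\phi_{x}(U_{x})\subset\phi_{y}(U_{y})$, the composition $\varphi=\phi_{y}^{-1}\circ\phi_{x}$ is a $C^{1}$-diffeomorphism from a neighborhood of $x$ onto a neighborhood of $y$ with $\varphi(x)=y$ and $\varphi(U_{x}\cap K)=\varphi(U_{x})\cap K$, which is exactly $C^{1}$-homogeneity.

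For the reverse implication the key object is the tangent cone. For $x\in K$ I would set
$$C_{x}K=\{v\in\R^{n}:\ \exists\,k_{i}\in K,\ k_{i}\to x,\ t_{i}>0,\ t_{i}(k_{i}-x)\to v\},$$
a closed cone with vertex at the origin. The first observation is that the tangent cone is a $C^{1}$-invariant: if $\varphi$ is a homogeneity diffeomorphism with $\varphi(x)=y$ and $\varphi(U_{x}\cap K)=U_{y}\cap K$, then applying the chain rule to the scaled secants gives $D\varphi_{x}(C_{x}K)=C_{y}K$. Consequently all tangent cones of $K$ are linearly isomorphic; in particular the topological dimension of $K$ takes a single value $d$, and every $C_{x}K$ is the image under a linear isomorphism of $\R^{n}$ of one fixed model cone.

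The heart of the argument, and the step I expect to be by far the hardest, is to show that this model cone, and hence every $C_{x}K$, is a genuine $d$-dimensional linear subspace rather than a branched or curved conical object. The rigidity should come from combining the self-similarity of the cone with the homogeneity of $K$: one shows the link $C_{x}K\cap S^{n-1}$ is a compact, topologically homogeneous set, and homogeneity together with the constancy of the local topological type of $(U,U\cap K,x)$ forces the link to be a flatly embedded $(d-1)$-sphere, so the cone is a subspace. An alternative route I would pursue in parallel is measure-theoretic: $C^{1}$-homogeneity forces the $d$-dimensional Hausdorff density of $K$ to exist, be positive and finite, and be constant, and positive finite density everywhere yields an approximate tangent plane at each point, which for a homogeneous set upgrades to a true tangent subspace. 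Either way the conclusion is $C_{x}K=:T_{x}K$, a $d$-dimensional subspace, and I would regard disposing of the nonlinear conical alternatives as the principal obstacle.

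Finally I would assemble the submanifold structure. Writing $T_{x}K=D\varphi_{x}(T_{x_{0}}K)$ in terms of the homogeneity maps shows that $x\mapsto T_{x}K$ is continuous (continuity of the tangent field also follows from upper semicontinuity of tangent cones together with constancy of the dimension). A continuous field of $d$-dimensional tangent subspaces that $K$ locally hugs, meaning that for each $\eps>0$ there is $\delta>0$ with $K\cap B_{\delta}(x)$ contained in the $\eps$-cone about $x+T_{x}K$, is precisely the hypothesis of the classical Whitney criterion, which presents $K$ near $x$ as the graph of a $C^{1}$ map from $T_{x}K$ into its orthogonal complement. Hence $K$ is a $C^{1}$-submanifold, completing the equivalence.
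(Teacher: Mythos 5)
This statement is quoted by the paper from \cite{RSS1996} (Repov\v{s}--Skopenkov--\v{S}\v{c}epin) and the paper itself contains no proof of it, so your attempt can only be judged on its own merits, not against an in-paper argument. Your forward implication (submanifold $\Rightarrow$ $C^1$-homogeneous) via composed flattening charts is fine. The reverse implication, however, has a genuine gap exactly where you acknowledge the difficulty lies: the step ``every tangent cone $C_xK$ is a $d$-dimensional linear subspace'' is asserted through two routes, neither of which works as sketched. First, there is no reason the link $C_xK\cap S^{n-1}$ is topologically homogeneous: $C^1$-homogeneity of $K$ produces isomorphisms $D\varphi_x\colon C_xK\to C_yK$ between cones at \emph{different} points, not a transitive family of self-maps of a single cone; and even granting homogeneity of the link, homogeneous compacta are very far from being spheres (finite sets, solenoids, the Menger curve), so ``homogeneous link $\Rightarrow$ flatly embedded $(d-1)$-sphere'' has no support. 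Second, the measure-theoretic route is circular: you never establish $0<\mathcal{H}^d(K\cap B)<\infty$ for small balls $B$ --- that $K$ carries locally finite positive $d$-dimensional measure is itself part of what must be proved --- and $C^1$-homogeneity transfers upper and lower densities between points only up to derivative factors; it does not create the \emph{existence} of the density limit that Marstrand/Preiss-type results require. The middle-thirds Cantor set is instructive: it is topologically homogeneous, and at a gap endpoint its tangent cone is a ray, not a subspace; any correct proof must isolate the mechanism by which $C^1$-homogeneity (as opposed to topological homogeneity) excludes such sets, and your sketch does not contain that mechanism.

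The final assembly step is also incomplete in two ways. Whitney-type $C^1$ criteria require control of \emph{secants between pairs} of nearby points of $K$ (the paratangent cone), not merely the tangent cone of rays emanating from each fixed point; your hypotheses control only the latter, and passing from pointwise tangent cones to the paratangent condition is a real step. Moreover, even with the full criterion, what one obtains is that $K$ is locally \emph{contained in} the graph of a $C^1$ map over $T_xK$; to conclude that $K$ is a submanifold you must additionally show that the projection of $K$ to $x+T_xK$ covers a neighborhood of $x$ (via local compactness together with an invariance-of-domain or boundary-point argument --- a frontier point of the shadow would force a half-plane tangent cone, contradicting linearity). As it stands, your proposal is a reasonable plan whose decisive steps are exactly the content of the cited theorem, so it cannot be accepted as a proof.
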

 
In order to prove Theorem \ref{mteo:differentiability.AC}, we will apply the following theorem by Journ\'e. It will be done in two steps. See discussion below Theorem \ref{journe}
  
\begin{theorem}\label{journe}\cite{J1988} Let ${\mathcal E}$ and $\mathcal{F}$ be two continuous transverse foliations with uniformly smooth leaves, of some manifold $N$, not necessarily compact. If the function $\phi:N\to \R^{m}$ is uniformly smooth along the leaves of $\mathcal{E}$ and $\mathcal{F}$, then $\phi$ is smooth.
\end{theorem}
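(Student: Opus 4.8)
The statement is Journé's regularity lemma, so the plan is to reconstruct the argument of \cite{J1988}. Write $r=k+\alpha$ with $k\in\N$ and $0<\alpha\le 1$. The assertion is local and the smoothness of $\phi$ is unaffected by passing to a chart, so the first reduction is to a small box in $\R^{n}$ carrying two transverse foliations $\mathcal{E},\F$ with uniformly $C^{r}$ leaves, along each of which $\phi$ is uniformly $C^{r}$. Since the tangent distributions $T\mathcal{E}$ and $T\F$ span the tangent space, I would next reduce to the model of two complementary foliations by choosing, inside each leaf, a $C^{r}$ parametrization (by graphs over $T\mathcal{E}$, resp.\ $T\F$, depending continuously on the leaf); this turns ``uniformly $C^{r}$ along leaves'' into the clean one-parameter statement that, restricted to any leaf, $\phi$ is a $C^{r}$ function of the leaf parameter with norm bounded independently of the leaf. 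The point I want to isolate immediately is that the two foliations can be straightened only by a homeomorphism, not by a $C^{r}$ diffeomorphism, since the transverse holonomy is merely continuous; hence one cannot reduce to a function that is separately $C^{r}$ in two \emph{straight} coordinate directions, and no mixed leafwise derivative is available a priori. Everything must be extracted intrinsically from the two families of pure leafwise derivatives.

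Treating first the non-integer case $0<\alpha<1$, the engine is the finite-difference (H\"older--Zygmund) characterization of $C^{r}$: a bounded continuous function lies in $C^{r}$ if and only if its $N$-th order differences $\Delta^{N}_{v}\phi(x)=\sum_{j=0}^{N}(-1)^{N-j}\binom{N}{j}\phi(x+jv)$, with $N=k+1>r$, satisfy $|\Delta^{N}_{v}\phi(x)|\le C|v|^{r}$ uniformly in $x$ and in the direction of $v$. The leafwise hypothesis gives exactly such a bound when the $N+1$ sample points are equally spaced along a single leaf of $\mathcal{E}$ or of $\F$, because on each leaf $\phi$ is a one-variable $C^{r}$ function with uniform norm. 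The task is then to bound $\Delta^{N}_{v}\phi(x)$ for an arbitrary short vector $v$ using only differences taken along the two foliations.

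I would accomplish this by a multiscale telescoping over the (continuous) net cut out by $\mathcal{E}$ and $\F$. Transversality makes the leaf-parameter metric uniformly comparable to the ambient metric, so any two nearby points are joined by a bounded-length concatenation of an $\mathcal{E}$-arc and an $\F$-arc of comparable length; writing the arbitrary-direction difference as a controlled sum of such leafwise pieces and organizing the estimate over dyadic scales $2^{-m}|v|$, the leafwise decay of order $(\text{step})^{r}$ feeds a Marchaud-type summation that returns the desired bound $C|v|^{r}$. This combination of $N$-th order difference operators taken along two transverse \emph{curved} families, keeping track of how the curvature of the leaves and their only-continuous dependence on the transverse leaf interact across scales, is the main obstacle and the technical core of the argument.

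Finally, the integer case $r=k$ is obtained by bootstrapping: the non-integer result already places $\phi$ in $C^{k-1+\beta}$ for every $\beta<1$, hence in $C^{k-1}$, and combining this with the existence and uniform leafwise continuity of the $k$-th leafwise derivatives along $\mathcal{E}$ and $\F$ upgrades $\phi$ to jointly $C^{k}$. Applying the local statement in overlapping charts and using that smoothness is a local property then yields the global conclusion, completing the reconstruction of Journ\'e's lemma.
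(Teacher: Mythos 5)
You should first note that the paper contains no proof of this statement at all: it is quoted as a known external result (Journ\'e \cite{J1988}), and for a detailed exposition the paper points to \cite{KatokNitica2009}, Section 3.3. So the comparison must be with Journ\'e's published argument, and against that your reconstruction has a genuine gap at its core. Your reductions (locality, leafwise parametrization, the observation that the foliations can be straightened only topologically) are fine, and the finite-difference characterization of $C^{k+\alpha}$ for $0<\alpha<1$ via $\bigl|\Delta^{N}_{v}\phi(x)\bigr|\le C|v|^{k+\alpha}$ is correct. But the engine of your proof --- bounding $\Delta^{N}_{v}\phi(x)$ for arbitrary $v$ by ``multiscale telescoping'' over concatenations of $\mathcal{E}$-arcs and $\mathcal{F}$-arcs --- is precisely the step that fails, and your text in fact stops at calling it ``the main obstacle'' without executing it. First-order differences telescope along a path; for $N\ge 2$ they do not: the sample points $x+jv$ lie on no common leaf, and any decomposition of $v$ into an $\mathcal{E}$-leg $e$ and an $\mathcal{F}$-leg $f$ produces mixed differences such as $\Delta_{e}\Delta_{f}\phi$, one increment along each foliation. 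Bounding such a term by $|e|\,|f|^{r-1}$ (or any splitting of the exponent $r$ across the two legs) would require H\"older control of the leafwise derivatives \emph{transverse} to the leaves --- but transversally one only has continuity, and that joint control is exactly what the theorem is supposed to produce. So the Marchaud-type dyadic summation has no valid input to sum, and the argument is circular at the decisive point.

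Journ\'e's actual mechanism is different and avoids finite differences entirely. At each point and scale $\rho$ he constructs a candidate Taylor polynomial by Lagrange interpolation on a grid adapted to the two foliations (points reached by moving along leaves of one foliation through a spray of basepoints on a leaf of the other). The key is that one-dimensional interpolation error estimates along leaves \emph{compose}: one interpolates along the $\mathcal{E}$-direction first, then interpolates the resulting coefficient functions along the $\mathcal{F}$-direction, using only values of $\phi$ --- never mixed derivatives --- so mere continuity transverse to the leaves suffices. This yields approximation of $\phi$ by polynomials to order $\rho^{k+\alpha}$ uniformly at all points and scales, and a Campanato-type converse to Taylor's theorem then gives $\phi\in C^{k+\alpha}$. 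A second, smaller gap is your integer case: at integer exponent the $N$-th difference bound characterizes the Zygmund class, not $C^{k}$, and your claimed bootstrap (``$C^{k-1+\beta}$ for all $\beta<1$ plus leafwise $C^{k}$ implies joint $C^{k}$'') is an unproved assertion that repackages the original problem. Journ\'e's lemma is stated for non-integer H\"older exponents (the $C^{\infty}$ and $C^{\omega}$ cases follow from those), which is the form the present paper actually uses.
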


We want to see that the immersions of the sets $BC(x)$, {\em the connected component of $AC(x)\cap B_{\eps}(x)$ containing $x$}, are smooth along transverse foliations. However, in our case we do not have smoothness along two transverse foliations {\em a priori}, so we proceed as Wilkinson in \cite{W2013}: first we prove that the immersions of $BC(x)$ are uniformly smooth along unstable and center leaves in a neighborhood of $x$. Since $\mathcal{F}^{u}$ and $\mathcal{F}^{c}$ are transverse foliations within each center-unstable leaf $W^{cu}(y)$, applying Journ\'e to each of these leaves we obtain smoothness along the foliation $\mathcal{F}^{cu}$. Note that this smoothness is uniform in a neighborhood of each $x$ (Proposition \ref{journe.cu}). \par

In this way we obtain two continuous transverse foliations, $\mathcal{F}^{s}$ and $\mathcal{F}^{cu}$, with uniformly smooth leaves, along whose leaves the immersion of $BC(x)$ is uniformly smooth. Journ\'e's Theorem \ref{journe} then implies that $BC(x)$ is smooth. This, in turn, implies that $AC(x)$ is an immersed $C^{1}$-manifold, and Theorem \ref{mteo:differentiability.AC} follows. 

\begin{proposition}\label{journe.cu} Let $f:M\to M$ be a dynamically coherent partially hyperbolic diffeomorphism. Let $\phi$ be uniformly smooth along the leaves of $\mathcal{F}^{c}$ and $\mathcal{F}^{u}$ in a neighborhood of $x\in M$, then $\phi$ is uniformly smooth along the leaves of $\mathcal{F}^{cu}$ in a neighborhood of $x$
\end{proposition}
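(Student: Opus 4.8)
The plan is to apply Journé's Theorem~\ref{journe} one center-unstable leaf at a time and then upgrade the resulting leafwise statement to a uniform one. Fix $x$ and a small neighborhood $U$ on which the hypotheses hold. For each $y\in U$, the center foliation $\mathcal{F}^{c}$ and the unstable foliation $\mathcal{F}^{u}$ restrict to subfoliations of the center-unstable leaf $W^{cu}(y)$; since $E^{c}$ and $E^{u}$ are transverse and together span $E^{cu}$, these restrictions are two continuous transverse foliations of $W^{cu}(y)$, whose leaves are uniformly smooth (the unstable leaves being uniformly $C^{\infty}$ by partial hyperbolicity, and the center leaves being uniformly smooth as leaves of the center foliation provided by dynamical coherence).

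Next I would verify the hypotheses of Theorem~\ref{journe} inside a single leaf $W^{cu}(y)$: the two subfoliations are transverse, as their tangent bundles $E^{c}$ and $E^{u}$ meet only in the zero section; their leaves are uniformly smooth, being exactly the center and unstable leaves; and $\phi$ restricted to $W^{cu}(y)$ is smooth along both subfoliations by hypothesis. Journé's theorem then yields that $\phi|_{W^{cu}(y)}$ is smooth, which is to say that $\phi$ is smooth along the leaves of $\mathcal{F}^{cu}$.

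The main obstacle is the word \emph{uniformly} in the conclusion: Theorem~\ref{journe} as stated produces smoothness on each individual leaf, whereas we need the leafwise estimates to be bounded independently of $y$. To secure this I would revisit the proof of Journé's theorem and record that every constant appearing there depends only on (i) a lower bound for the transversality angle between the two subfoliations, (ii) uniform bounds on the foliation charts and on the leaves in the relevant regularity class, and (iii) the uniform bounds on the leafwise derivatives of $\phi$. Quantities (i) and (ii) are uniform over a compact neighborhood of $x$, by continuity of the invariant bundles and the uniform smoothness of the strong and center foliations, while (iii) is uniform by hypothesis. Consequently the estimate for $\phi|_{W^{cu}(y)}$ is independent of $y$, which is precisely the asserted uniform smoothness of $\phi$ along $\mathcal{F}^{cu}$ near $x$. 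The same bookkeeping applies with a Hölder exponent in place of an integer regularity, covering the $C^{1+\theta}$ case that arises in Theorem~\ref{mteo:differentiability.AC}.
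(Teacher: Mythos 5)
Your proposal is correct and follows essentially the same route as the paper: the paper's proof simply invokes the proof of Journ\'e's Theorem (citing the detailed exposition in Katok--Nitica, Section 3.3) for exactly the point you elaborate, namely that applying Journ\'e leaf-by-leaf in each $W^{cu}(y)$ gives smoothness whose estimates depend only on uniform data (transversality, foliation bounds, and the uniform leafwise bounds on $\phi$), hence uniform smoothness along $\mathcal{F}^{cu}$ near $x$. Your write-up just makes explicit the bookkeeping that the paper leaves implicit.
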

\begin{proof}

This follows from Journ\'e's Theorem proof. A more detailed and reader-friendly proof may be found in \cite{KatokNitica2009}, Section 3, Preparatory results from analysis, and in particular, Section 3.3. Journ\'e's Theorem.
\end{proof}

So, in order to finish the proof of Theorem \ref{mteo:differentiability.AC}, the only thing left is to show that the immersions of the sets $BC(x)$ are uniformly smooth along the leaves of $\mathcal{F}^{c}$ in a neighborhood of each $x\in M$. This is proved in the following proposition:

\begin{proposition} 
The immersions of $BC(x)$ are uniformly smooth along the center leaves of $\mathcal{F}^{c}$.
\end{proposition}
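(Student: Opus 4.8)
The plan is to reduce the statement to a \emph{uniformity} assertion about the $C^{1}$ structure that Proposition~\ref{homogeneous} already produces one center leaf at a time. Fix $x$ and work in a small ball $B_{\eps}(x)$. For every $\xi\in BC(x)$ the center leaf $W^{c}(\xi)$ meets $BC(x)$ in (the connected component through $\xi$ of) $AC(\xi)\cap W^{c}_{\eps}(\xi)$, which by Proposition~\ref{homogeneous} is $C^{1}$-homogeneous and hence, by the theorem of \cite{RSS1996}, a $C^{1}$ submanifold of $W^{c}(\xi)$; in the only delicate case, that of a codimension-one class, Theorem~\ref{teo.tricotomia.ac} identifies it as a $C^{1}$ curve. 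So the immersion of $BC(x)$ is automatically $C^{1}$ along each individual center leaf, and the whole content of the proposition is that these restrictions are \emph{uniformly} $C^{1}$ as $\xi$ varies in a neighborhood.

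First I would fix a $C^{1}$ parametrization $\st_{x}\colon(-\delta,\delta)\to W^{c}_{\eps}(x)$ of the center curve $AC^{D}(x)$ with $\st_{x}(0)=x$. The key point, already isolated in the proof of Proposition~\ref{homogeneous}, is that the $C^{1}$-homogeneity is realized by the local homeomorphisms $h$ of Lemma~\ref{c.homogeneous}, and that when the discs are chosen inside center leaves these $h$ are \emph{exactly} the local stable and unstable holonomy maps, restricted to the appropriate center-stable and center-unstable leaves. By Theorem~\ref{PSW} in the $C^{2}$ center-bunched case (respectively Theorem~\ref{brown} in the $C^{1+\alpha}$ strongly bunched case) each such holonomy is \emph{uniformly} $C^{1}$ (indeed $C^{1+\theta}$) on these leaves. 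I would then transport, setting $\st_{\xi}=h_{\xi}\circ\st_{x}$, where $h_{\xi}$ is the holonomy along a short $su$-path from $x$ to $\xi$; dynamical coherence guarantees that $h_{\xi}$ sends center leaves to center leaves, and $su$-saturation of the class guarantees that it sends $AC^{D}(x)$ onto $AC^{D}(\xi)$, so that $\st_{\xi}$ is a $C^{1}$ parametrization of the center curve through $\xi$.

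The uniform $C^{1}$ bounds of Theorems~\ref{PSW} and~\ref{brown}, together with the continuous dependence of the holonomies on their basepoints, then give a uniform bound on $\|D\st_{\xi}\|$ and uniform continuity of $\xi\mapsto D\st_{\xi}$ over the neighborhood. This is exactly the assertion that the immersion of $BC(x)$ is uniformly smooth along the leaves of $\F^{c}$, which is precisely the missing ingredient in the scheme described before Proposition~\ref{journe.cu}, feeding into Journ\'e's Theorem~\ref{journe} to finish the proof of Theorem~\ref{mteo:differentiability.AC}.

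The hard part will be the uniform bookkeeping behind the transport. I must arrange that every $\xi$ in a fixed small neighborhood of a given point of $BC(x)$ can be joined to it by an $su$-path with a \emph{uniformly bounded} number of legs of \emph{uniformly bounded} length; otherwise the chain rule applied to the composition $h_{\xi}$ could let derivatives grow without control, and the bound would fail to be uniform. Establishing this bounded complexity of local accessibility, using the local product-like structure encoded in the trichotomy of Theorem~\ref{teo.tricotomia.ac}, is the step I expect to require the most care; once it is in place, the uniform $C^{1}$ estimate and the equicontinuity of $\xi\mapsto D\st_{\xi}$ follow from the uniform holonomy theorems as above.
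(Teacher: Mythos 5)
Your overall strategy---transport the center curve by stable/unstable holonomies and invoke the uniform $C^{1}$ bounds of Theorems~\ref{PSW} and~\ref{brown}---is the same as the paper's, but your proof has a genuine gap exactly where you say it does: you never establish the ``bounded complexity of local accessibility'' (uniformly many legs of uniformly bounded length), and you correctly observe that without it the chain rule destroys uniformity of $\|D\st_{\xi}\|$. Deferring this step is not a minor bookkeeping omission; it is the entire content of the proposition, since the pointwise $C^{1}$ structure of each $AC^{D}(\xi)$ is already supplied by Proposition~\ref{homogeneous} and \cite{RSS1996}.

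What closes the gap---and what the paper's proof does---is the observation that \emph{two} legs always suffice, so no bounded-complexity lemma is needed at all. Write $C(y)=BC(x)\cap W^{c}_{\eps}(y)$. By dynamical coherence, for every $y$ in a small neighborhood of $x$ the intersection $W^{cs}_{\mathrm{loc}}(x)\cap W^{cu}_{\mathrm{loc}}(y)$ is a single local center leaf $W^{c}_{\mathrm{loc}}(w)$; hence $C(y)$ is the image of $C(x)$ under the composition of just two maps: the stable holonomy inside $W^{cs}(x)$ from $W^{c}(x)$ to $W^{c}(w)$, followed by the unstable holonomy inside $W^{cu}(y)$ from $W^{c}(w)$ to $W^{c}(y)$. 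Each of these is \emph{uniformly} $C^{1}$ by Theorem~\ref{PSW} (resp.\ Theorem~\ref{brown}), with bounds independent of the basepoint and of the particular cs/cu leaf, so the composition is uniformly $C^{1}$ in $y$---there is no iteration of holonomies and hence no derivative growth to control. Your multi-legged transport along an arbitrary $su$-path from $x$ to $\xi$ is exactly the formulation that creates the difficulty you could not resolve; replacing it by this canonical two-legged holonomy (which exists precisely because $f$ is dynamically coherent, a hypothesis you invoke only to say holonomy maps center leaves to center leaves) turns the proposition into a two-line argument.
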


\begin{proof} For each $y\in BC(x)$, call $C(y)=BC(x)\cap W^{c}_{\eps}(y)$. Now $C(y)$ is the image of $C(x)$ by the composition of the local stable holonomy restricted to $W^{cs}(x)$ and the local unstable holonomy restricted to $W^{cu}(y)$. Since  the local stable and unstable holonomies are uniformly $C^{1}$ when restricted to each center stable and center unstable leaves respectively, then $y\mapsto C(y)$ is uniformly $C^{1}$ in a neighborhood of $x$.
\end{proof}

\section{Some further properties of accessibility classes}

In this section, we give a further description of accessibility classes in our setting. Let us recall that $\phi:M\to 2^{M}$ is {\em lower semicontinuous} if for any open set $W\subseteq M$ the set of $x\in M$ such that $\phi(x)\cap W\ne\emptyset$ is open in $M$.
The following is a direct consequence of Lemma \ref{c.homogeneous}, and holds for center bundle of any dimension when there is dynamical coherence:
\begin{proposition}\label{lower.semicontinuity}
The function $x\mapsto AC(x)$ is lower semicontinuous. 
\end{proposition}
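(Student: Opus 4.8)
The plan is to unwind the definition of lower semicontinuity and reduce the whole statement to the local matching homeomorphism of \lemref{c.homogeneous}. Fix an open set $W\subseteq M$ and put $\mathcal{O}=\{x\in M: AC(x)\cap W\ne\emptyset\}$; the goal is to prove that $\mathcal{O}$ is open. So I would take an arbitrary $x_{0}\in\mathcal{O}$ and choose a witness $y\in AC(x_{0})\cap W$. Since accessibility is an equivalence relation, $x_{0}$ and $y$ belong to the common class $AC(x_{0})=AC(y)$, and \lemref{c.homogeneous} applies to the pair $x_{0},y$: fixing center discs $D_{x_{0}}$ and $D_{y}$ transverse to $E^{s}\oplus E^{u}$ at $x_{0}$ and $y$, it produces subdiscs $U_{x_{0}}\subset D_{x_{0}}$, $U_{y}\subset D_{y}$ and a homeomorphism $h:U_{x_{0}}\to U_{y}$ with $h(x_{0})=y$ and $h(\xi)\in AC(\xi)$ for every $\xi\in U_{x_{0}}$.

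The first step is to propagate membership in $\mathcal{O}$ transversally, along the center disc $D_{x_{0}}$. Because $h$ is continuous, $h(x_{0})=y\in W$, and $W$ is open, there is an open subdisc $U'\subset U_{x_{0}}$ containing $x_{0}$ with $h(U')\subset W$. For each $\xi\in U'$ we then have $h(\xi)\in AC(\xi)\cap W$, so $\xi\in\mathcal{O}$; that is, $U'\subset\mathcal{O}$. This already exhibits a full neighborhood of $x_{0}$ in $\mathcal{O}$ \emph{inside the center disc} $D_{x_{0}}$, but not yet a neighborhood of $x_{0}$ in $M$.

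The second step, which I expect to be the only genuinely delicate point, removes this defect by absorbing the strong directions through the $su$-holonomy projection $\pi^{su}:B(x_{0})\to D_{x_{0}}$ introduced before Theorem~\ref{teo.tricotomia}. The key observations are that $\pi^{su}$ is continuous with $\pi^{su}(x_{0})=x_{0}$, and that $\pi^{su}(x)$ is joined to $x$ by a (two-legged) $su$-path, so that $\pi^{su}(x)\in AC(x)$ and hence $AC(x)=AC(\pi^{su}(x))$ for every $x\in B(x_{0})$. Since $U'$ is a relative neighborhood of $x_{0}$ in $D_{x_{0}}$ and $\pi^{su}$ is continuous, $V=(\pi^{su})^{-1}(U')$ is an open neighborhood of $x_{0}$ in $M$. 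For each $x\in V$ we have $\pi^{su}(x)\in U'\subset\mathcal{O}$, whence $AC(x)=AC(\pi^{su}(x))$ meets $W$ and $x\in\mathcal{O}$. Thus $V\subset\mathcal{O}$, proving that $\mathcal{O}$ is open and that $x\mapsto AC(x)$ is lower semicontinuous. The conceptual heart of the argument is precisely this passage: \lemref{c.homogeneous} controls the accessibility class only transversally to $E^{s}\oplus E^{u}$, and one must use the invariance of $AC$ along $su$-paths to upgrade a center-disc neighborhood into an honest neighborhood in $M$; everything else is routine continuity.
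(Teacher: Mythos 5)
Your proof is correct and follows essentially the same route as the paper's: apply Lemma~\ref{c.homogeneous} to the pair $x_{0},y$, shrink the domain so the image of $h$ lies in $W$, and then pull back this center-disc neighborhood by the continuous projection $\pi^{su}$, using $AC(z)=AC(\pi^{su}(z))$ to obtain an open neighborhood of $x_{0}$ in $M$ contained in $\{x:AC(x)\cap W\ne\emptyset\}$. The only difference is expository: you make explicit the shrinking step and the reason why $\pi^{su}(z)\in AC(z)$, which the paper leaves implicit.
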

\begin{proof}
 Let $W$ be an open set, and let $x$ be such that $y\in AC(x)\cap W\ne \emptyset$. Take $U_{x}$ and $h$ as in Lemma \ref{c.homogeneous}, such that $h(U_{x})\subset W$. Then for all $w\in U_{x}$, $h(w)\in AC(w)\cap W\ne\emptyset$. Now $V=(\pi^{su})^{-1}(U_{x})$ is an open neighborhood of $x$, and for all $z\in V$, $AC(z)=AC(\pi^{su}(z))$, and $AC(\pi^{su}(z))\cap W\ne\emptyset$.
\end{proof}

The stable and unstable foliations are {\em jointly integrable at a point $x\in M$} if the stable holonomy map between two close center unstable disks, one of which contains $x$, takes the unstable leaf containing $x$ into another unstable leaf. Equivalently, if $F_{\eps}(x)=\{x\}$. Let $\Gamma (f)$ be the set of points $x$ of $M$ such that the stable and unstable foliations are jointly integrable at all points of $AC(x)$. Then $\Gamma (f)$ is a compact set laminated by the codimension two accessibility classes.

\begin{proposition} \label{perturbacion.clases}
Let $K\subset \Gamma(f)$ be a minimal set, meaning $K$ is a compact, $f$-invariant set, with no proper non-empty $f$-invariant subsets. Then there exists $f_{n}\to f$ in the $C^{r}$-topology, such that  $f_{n}|_{K}=f|_{K}$ and $AC_{f_{n}}(x)$ is either a codimension-one immersed manifold for all $x\in K$ or an open set for all $x\in K$.
\end{proposition}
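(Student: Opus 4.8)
The plan is to reduce the statement to a single, localized act of breaking joint integrability of $W^{s}$ and $W^{u}$, and then to let the minimality of $K$ spread the resulting property over all of $K$. First I would invoke the trichotomy of Theorem~\ref{teo.tricotomia.ac}: a class $AC(x)$ is codimension-two precisely when $AC^{D}(\xi)$ is a point for every $\xi\in AC(x)$, i.e.\ when $W^{s}$ and $W^{u}$ are jointly integrable at every point of $AC(x)$. Conversely, by the Remark following Theorem~\ref{teo.tricotomia}, if joint integrability fails at even one point $\xi\in AC(x)$, then $F_{\eps}(\xi)$ contains a segment and $AC(x)$ is codimension-one or open. Thus it suffices to produce, for the perturbed map, one non-jointly-integrable point inside every accessibility class that meets $K$.

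Assuming $K\subsetneq M$ (so that $K$, being minimal in a connected manifold, has empty interior, hence is nowhere dense and $M\setminus K$ is open and dense), I would fix $p\in K$ and choose a point $z$ on $W^{u}_{\eps}(p)$ with $z\notin K$; then $z\in AC(p)$ and there is a flow box $U\ni z$ with $\overline{U}\cap K=\emptyset$. Following the tilting construction of Dolgopyat--Wilkinson \cite{DW2003} and Burns--Hertz--Hertz--Talitskaya--Ures \cite{BHHTU2008}, I would set $f_{n}=f\circ\varphi_{n}$ with $\varphi_{n}$ a $C^{r}$-small diffeomorphism supported in $U$ that rotates $E^{u}$ relative to $E^{s}$ inside $U$. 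Since $U\cap K=\emptyset$ we get $f_{n}|_{K}=f|_{K}$, and $f_{n}\to f$ in $C^{r}$ as the tilt shrinks. For a generic tilt the $su$-holonomy around a small loop through $U$ becomes nontrivial, so joint integrability fails at a point $\xi$ near $z$; as the perturbation is $C^{1}$-small the foliations move little near $K$, so $\xi$ still lies on the unstable leaf of $p$ for $f_{n}$ and $\xi\in AC_{f_{n}}(p)$. In particular $AC_{f_{n}}(p)$ is not codimension-two.

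The pleasant point is that minimality now upgrades this single point to all of $K$. Let $J_{n}\subset M$ be the set of points at which $W^{s}$ and $W^{u}$ of $f_{n}$ fail to be jointly integrable; failure is witnessed by a non-closing $su$-quadrilateral, so $J_{n}$ is open, and it is $f_{n}$-invariant since $f_{n}$ carries $su$-quadrilaterals to $su$-quadrilaterals. Consider $G=\{x\in K: AC_{f_{n}}(x)\cap J_{n}\neq\emptyset\}$. By the lower semicontinuity of $x\mapsto AC_{f_{n}}(x)$ (Proposition~\ref{lower.semicontinuity}) together with the openness of $J_{n}$, the set $G$ is open in $K$; it is invariant because $f_{n}|_{K}=f|_{K}$ permutes accessibility classes and preserves $J_{n}$; and it is nonempty since $p\in G$. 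Minimality of $K$ forces $G=K$, so every class meeting $K$ contains a point of $J_{n}$ and is therefore codimension-one or open by Theorem~\ref{teo.tricotomia.ac}. Each such $f_{n}$ satisfies the conclusion, and letting the tilt size tend to $0$ yields the required sequence $f_{n}\to f$.

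The main obstacle is the tilting perturbation itself: one must check that a $C^{r}$-small deformation supported in a single flow box $U$ disjoint from $K$ genuinely destroys the closing of the $su$-quadrilateral at a point that remains $su$-accessible from $p$ after the perturbation. This is exactly the delicate holonomy estimate of \cite{DW2003,BHHTU2008}, which I would import essentially verbatim; the only new ingredient is that the support must avoid $K$, which is possible because $z$ can be taken off the nowhere dense minimal set. The case $K=M$ is degenerate and is excluded from (or read trivially in) the relevant setting, since then $f_{n}|_{K}=f|_{K}$ would force $f_{n}=f$.
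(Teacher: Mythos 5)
Your proposal is correct and follows essentially the same route as the paper: your tilting perturbation supported off $K$ is exactly the paper's Lemma~\ref{F.eps.no.vacio} (imported, as you do, from Lemma~6.1 of \cite{BHHTU2008}), your openness of the non-joint-integrability locus $J_n$ is the paper's Lemma~\ref{persistencia.F.eps}, and your open-invariant-set/minimality argument is the paper's spreading of this property along dense orbits, with the trichotomy of Theorem~\ref{teo.tricotomia.ac} converting $F_{\eps}(\cdot)\ne\{\cdot\}$ into ``codimension-one or open.'' The only cosmetic difference is packaging: the paper localizes the broken point inside $K$ itself and uses persistence on a neighborhood $U(x)$, while you place it on $AC_{f_n}(p)$ and quotient the argument through lower semicontinuity (Proposition~\ref{lower.semicontinuity}).
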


Recall that in a minimal set, every orbit is dense, hence if $AC(x)$ is open for some $x\in K$, then all $AC(\xi)$ with $\xi\in K$ would be open, since the orbit of $\xi$ would eventually fall in $AC(x)$, what would imply $AC(f^{n}(x))$ is open for some $n\in\Z$, and $AC(f^{n}(\xi))=f^{n}(AC(\xi))$ is homeomorphic to $AC(\xi)$. 

Proposition \ref{perturbacion.clases} makes use of the following lemmas:

\begin{lemma}[\cite{HHU2008,BHHTU2008}]
The set of points that are non-recurrent in the future $\{z:z\notin\omega(z)\}$ is dense in every leaf of $\mathcal{F}^{u}$.  
\end{lemma}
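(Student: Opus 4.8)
The plan is to reduce the statement to a purely local one and then build an escaping orbit by hand. Density in a leaf is a local property, so it suffices to show that every unstable plaque $D=W^u_\rho(x)$ contains a forward non-recurrent point, i.e. a point $z$ for which there is $\delta>0$ with $d(f^n(z),z)\ge\delta$ for all large $n$. Since I will take $z$ close to the plaque's center $x$, it is enough to keep the forward orbit of $z$ out of a fixed ball $B_\delta(x)$ from some time on; I would then promote density to every leaf and every scale afterwards, using that forward iterates carry plaques onto arbitrarily large pieces of each leaf.

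The construction exploits the uniform expansion of $f$ along $\mathcal{F}^u$ guaranteed by \eqref{phexpansion}: there are $\lambda>1$ and $r_0>0$ so that $f$ stretches every unstable $r$-disc ($r\le r_0$) into an unstable disc of inradius $\ge\lambda r$ about its image center. Starting from $D$, I would produce a nested sequence of sub-plaques $D\supset\overline{D_1}\supset\overline{D_2}\supset\cdots$ and times $0=t_0<t_1<\cdots$ as follows. Assuming inductively that the orbit of $D_k$ avoids $B_\delta(x)$ up to time $t_k$, I use that $f^{t_k}(D_k)$ is an unstable disc of definite (indeed large) size to pick inside it a sub-disc whose next visit to $B_\delta(x)$ is postponed, and pull this choice back to $D_{k+1}\subset D_k$ by the local inverse $f^{-t_k}$ (a contraction by $\lambda^{-t_k}$ along the leaf, so the nesting is genuine). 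The point $z=\bigcap_k\overline{D_k}$ then has forward orbit disjoint from $B_\delta(x)$ after time $t_1$, and taking $d(x,z)<\delta/2$ gives $f^n(z)\notin B_{\delta/2}(z)$ for all large $n$, hence $z\notin\omega(z)$. The local product structure is used throughout to track, via $\pi^s,\pi^u$, which plaque of $L\cap B$ a near-return lands on.

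The hard part will be the step that makes escape possible: I must guarantee that at each stage $f^{t_k}(D_k)$ really contains a sub-disc whose orbit can be kept out of $B_\delta(x)$, i.e. that the orbit can be prevented from returning \emph{forever} rather than merely finitely often. This cannot be arranged by expansion alone — if $f$ were minimal every orbit would return to every neighborhood and no such $z$ could exist — so the construction must feed on the ambient non-minimality: an open region missed by some $\omega$-limit set (extracted from the chain-recurrence decomposition), which the large unstable disc $f^{t_k}(D_k)$ is forced to meet, and which is transported back along the leaf by expansion. This global bookkeeping, controlling the entire future itinerary while keeping returns on the correct plaque, is the delicate content imported from \cite{HHU2008,BHHTU2008}. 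Alternatively, the whole argument can be packaged via Baire category: write the recurrent set as the $G_\delta$ set $\bigcap_{k,N}\bigcup_{n\ge N}\{z:d(f^n(z),z)<1/k\}$ inside the leaf $L$ and show it has empty interior by producing one escaping orbit near every point of $L$.
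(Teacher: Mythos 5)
The first thing to note is that the paper contains no proof of this lemma: it is stated as an imported result, cited to \cite{HHU2008,BHHTU2008}, so your proposal has to stand entirely on its own. It does not. The nested-plaque bookkeeping (sub-discs $D_{k+1}\subset D_k$ pulled back by $f^{-t_k}$, limit point $z$ escaping $B_\delta(x)$, hence $z\notin\omega(z)$) is fine, but it is only bookkeeping: the whole mathematical content of the lemma sits in the induction step you yourself flag as ``the hard part,'' and there you do not give an argument --- you explicitly defer it to ``the delicate content imported from \cite{HHU2008,BHHTU2008}.'' A proof whose key step is the citation of the statement being proved (or of its essential content) is circular, so as it stands this is a gap, not a proof.

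Moreover, the mechanism you sketch for that step would fail if one tried to carry it out. (i) ``An open region missed by some $\omega$-limit set, extracted from the chain-recurrence decomposition'': for a general partially hyperbolic $f$ the chain recurrent set can be all of $M$ --- any transitive example (a hyperbolic toral automorphism crossed with nothing, the time-one map of a mixing Anosov flow) is chain transitive --- so Conley theory hands you no distinguished open region; and non-minimality of $f$ is not an available hypothesis: ruling out minimality is part of what this lemma asserts (you yourself observe that minimality would contradict it), so you cannot feed it into the proof. (ii) ``Which the large unstable disc $f^{t_k}(D_k)$ is forced to meet'': false in general; large unstable discs need not meet a prescribed open set unless the unstable foliation is minimal. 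For $f=A\times\mathrm{id}$ on $\mathbb{T}^2\times S^1$ with $A$ Anosov (a partially hyperbolic map), every unstable leaf lies in a single slice $\mathbb{T}^2\times\{\theta\}$, so unstable discs of arbitrarily large inradius miss the open set $\mathbb{T}^2\times J$ whenever $J$ is an interval avoiding $\theta$. (iii) Most seriously, even when the disc does meet such a region $V$, the hypothesis $V\cap\omega(p)=\emptyset$ concerns the future of some \emph{other} point $p$; it imposes no constraint on the future orbits of the points of your disc that happen to lie in $V$, and it is their futures that must avoid $B_\delta(x)$. The only reading under which this becomes useful is $V\ni x$ and $\omega$-limit set $=\omega(D)$, the limit set of the disc itself --- but then the lemma is immediate with no nested construction, and this case simply never occurs in the transitive examples, where $\omega(D)=M$ for every unstable disc; yet the lemma is just as nontrivial there (for a transitive Anosov map it amounts to the density in $W^u$ of points, like homoclinic points, whose $\omega$-limit set does not contain them). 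The closing Baire-category packaging has the same defect: ``producing one escaping orbit near every point of $L$'' is verbatim the statement to be proved. In short, you have correctly located where the difficulty lies, but the argument that overcomes it --- one that must work when every leaf is dense, every $\omega$-limit-type set is all of $M$, and no proper attractor or invariant open region exists --- is exactly what is missing, and it is of a genuinely different nature from the expansion-plus-avoidance scheme proposed here.
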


\begin{lemma} \label{F.eps.no.vacio}
Let $K\subset \Gamma(f)$ be a minimal set (see definition in Proposition \ref{perturbacion.clases}). Then there exists $f_{n}\to f$ in the $C^{r}$-topology, such that  $f_{n}|_{K}=f|_{K}$ and $F_{f_{n},\eps}(x)\ne\{x\}$ for some $x\in K$. 
\end{lemma}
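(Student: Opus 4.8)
The plan is to produce the approximating sequence by a single localized \emph{bending} perturbation whose amplitude we then let tend to zero. First I would record the reduction supplied by the Remark following Theorem~\ref{teo.tricotomia}: the condition $F_{\eps}(x)\neq\{x\}$ is equivalent to the strong stable and strong unstable foliations \emph{failing} to be jointly integrable at $x$. Hence if joint integrability already fails somewhere on $K$ we are done with $f_{n}\equiv f$, so I assume throughout that $W^{s}$ and $W^{u}$ are jointly integrable at every point of $K$. Since partial hyperbolicity is $C^{1}$-open, every sufficiently $C^{1}$-small perturbation $g$ of $f$ is again partially hyperbolic and carries its own foliations $W^{s}_{g},W^{u}_{g}$ and center bundle $E^{c}_{g}$, so $F_{g,\eps}(x)$ makes sense. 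It therefore suffices to show that for every $\delta>0$ there is $g$ with $d_{C^{r}}(g,f)<\delta$, $g|_{K}=f|_{K}$, and $F_{g,\eps}(x)\neq\{x\}$ for some $x\in K$; taking $\delta=1/n$ yields the sequence.

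Next I would locate the support of the perturbation using non-recurrence. Fix $x\in K$. By the Lemma stated above, the set $\{z:z\notin\omega(z)\}$ is dense in $W^{u}(x)$, so I may pick such a point $z\in W^{u}(x)$ within distance $\eps/2$ of $x$ along the leaf. Minimality of $K$ forces $z\notin K$: a point of the minimal set satisfies $\omega(z)=K\ni z$ and is thus future-recurrent. Consequently I can choose a ball $U\ni z$ with $\overline U\cap K=\emptyset$ and, shrinking $U$ and using $z\notin\omega(z)$ together with the fact that $z$ is not periodic, with $f^{n}(\overline U)\cap\overline U=\emptyset$ for all $n\geq1$. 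This wandering property is the essential payoff of non-recurrence: a perturbation supported in a preimage of $U$ affects a forward orbit only once and is never fed back, and, because $U$ is disjoint from $K$, it leaves the dynamics on $K$ untouched.

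Then I would carry out the bending. I would set $g=\psi\circ f$ with $\psi$ a $C^{r}$-small diffeomorphism, $\psi\to\mathrm{id}$, supported in $U$ and designed so that $\psi$ pushes points along the two-dimensional center direction $E^{c}$ by an amount that varies across $U$. Since $g$ differs from $f$ only on $f^{-1}(U)$, which is disjoint from $K$, we get $g|_{K}=f|_{K}$ and $K$ stays invariant; away from the forward iterates of $U$ the foliation $W^{s}_{g}$ coincides with $W^{s}$. The effect of $\psi$ is to tilt the local strong stable leaves meeting $U$ so that $W^{s}_{g}$ acquires a nonzero center component relative to $W^{s}$, and by the wandering property this tilt is not cancelled by later returns. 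Finally, consider the $su$-quadrilateral based at $x$ that runs out along $W^{u}(x)$ toward $z$, crosses a short $W^{s}_{g}$-leg through $U$, and returns: under $f$ it closed up by the standing joint-integrability assumption, whereas for $g$ the center tilt makes its holonomy a nontrivial center translation, so that $\pi^{us}$ and $\pi^{su}$ computed for $g$ no longer agree at $x$. Thus $F_{g,\eps}(x)=\pi^{us}((\pi^{su})^{-1}(x))$ contains a point $x'\neq x$, with $\eps$ adapted to the diameter of $U$ and the length of the path. Letting the amplitude of $\psi$ tend to $0$ gives $f_{n}\to f$ with the stated properties.

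The hard part is the last step: verifying the \emph{non-degeneracy} of the construction, namely that the one-shot center bending in $U$ really produces a \emph{nonzero} net center component in the $su$-holonomy at $x$ and is not undone by the unperturbed return dynamics. This is exactly where the wandering property of $U$ (so the perturbation acts only once, with no compensating recurrences) must be combined with a first-order estimate showing that the center component of the stable holonomy depends monotonically on $\psi$; here the two-dimensionality of $E^{c}$ is what provides the room to realize the transverse translation. Everything else—the $C^{r}$-smallness, the invariance of $K$, and the equivalence with joint integrability—is soft and follows from the reductions above.
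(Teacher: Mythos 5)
Your overall strategy --- reduce to breaking joint integrability at some point of $K$, use the density of non-recurrent points in unstable leaves to find a perturbation site $z\in W^{u}(x)\setminus K$, and then make a $C^{r}$-small push in the center direction supported away from $K$ --- is exactly the strategy of the proof the paper relies on. The paper does not reprove this lemma at all: it cites Lemma 6.1 of \cite{BHHTU2008}, which is precisely the statement that joint integrability at a point of $K$ can be broken by a small $C^{r}$ push outside $K$. So at the level of architecture you have reconstructed the intended argument, and your soft reductions (the equivalence with failure of joint integrability, $z\notin K$ by minimality, $g|_{K}=f|_{K}$ from the support condition) are fine.

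The execution, however, has a genuine gap, and it sits exactly where you put the weight of the proof. The claim that non-recurrence plus non-periodicity of $z$ yields, after shrinking, a neighborhood $U\ni z$ with $f^{n}(\overline U)\cap\overline U=\emptyset$ for all $n\geq 1$ is false. What $z\notin\omega(z)$ gives is only that the orbit of $z$ itself stays a definite distance from $z$ (there is $r>0$ with $d(f^{n}(z),z)\geq r$ for all $n\geq 1$); it says nothing about orbits of nearby points. A transverse homoclinic point $z$ of a hyperbolic fixed point $p$ is non-recurrent, since $\omega(z)=\{p\}$, yet by the inclination lemma every neighborhood $U$ of $z$ satisfies $f^{n}(U)\cap U\neq\emptyset$ for all large $n$. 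Your entire non-cancellation mechanism (``the perturbation acts only once and is never fed back'') rests on this wandering property: without it, the $g$-legs of your $su$-quadrilateral need not be the $\psi$-images of the $f$-legs, because the forward iterates of the support may keep hitting them, and the claimed persistence of the center tilt is unjustified. Compounding this, you explicitly defer the non-degeneracy estimate (that the holonomy picks up a nonzero center displacement) as ``the hard part''; carrying out that first-order estimate using only the orbit-avoidance $f^{n}(z)\notin B$, $n\geq 1$ (which is what non-recurrence actually provides), is the real technical content of Lemma 6.1 in \cite{BHHTU2008}. As written, the proposal is a correct plan whose two decisive steps are, respectively, wrong and missing.
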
 
The proof of Lemma \ref{F.eps.no.vacio} is Lemma 6.1. in \cite{BHHTU2008}. Indeed, what it is proved there is that the joint integrability at any $x\in K$ can be broken by a small $C^{r}$ push outside $K$. This yields openness of the accessibility class of $x$ of the perturbed system in the case $\dim E^{c}=1$, but only implies $F_{\eps,f_{n}}(x)\ne\{x\}$ in the case $\dim E^{c}=2$. 

\begin{lemma}\label{persistencia.F.eps} If $F_{\eps,f}(x)\ne\{x\}$, then there exist an open neighborhood $U(x)$of $x$ and a $C^{1}$-neighborhood $\mathcal{U}(f)$ of $f$ such that $F_{\eps,g}(y)\ne\{y\}$ for all $g\in \mathcal{U}$ and all $y\in U(x)$. Also, there exists $\mu>0$ such that $\text{diam }(F_{\eps,g}(y))>\mu$ for all $g\in\mathcal{U}$ and all $y\in U(x)$.
\end{lemma}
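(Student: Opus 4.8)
The plan is to establish two facts: an open-and-persistence statement for the condition $F_{\eps,g}(y)\ne\{y\}$, and a uniform lower bound $\mu$ on the diameters. The underlying mechanism is that $F_{\eps,f}(x)\ne\{x\}$ encodes a genuine failure of joint integrability of $W^s$ and $W^u$ near $x$, and such a failure is robust because it is witnessed by a concrete $su$-path whose endpoints fail to coincide.

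\medskip

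First I would unpack the hypothesis. Recall that $F_{\eps,f}(x)=\pi^{us}((\pi^{su})^{-1}(x))$, so $F_{\eps,f}(x)\ne\{x\}$ means there is a point $w$ with $\pi^{su}_{f}(w)=x$ but $\pi^{us}_{f}(w)\ne x$; equivalently, following the four-legged holonomy circuit $W^{u}_{\eps}$ then $W^{s}_{\eps}$ then back the other way returns to a point $z:=\pi^{us}_{f}(w)$ at positive distance from $x$, say $\mathrm{dist}(z,x)=2\delta>0$. The key observation is that the local stable and unstable foliations $W^{s}_{\eps,g}$, $W^{u}_{\eps,g}$ depend continuously (indeed $C^{0}$, uniformly on compacta) on $g$ in the $C^{1}$-topology, a standard consequence of the structural stability of strong foliations for partially hyperbolic systems \cite{HPS}. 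Hence the projections $\pi^{s}_{g},\pi^{u}_{g}$, and therefore $\pi^{su}_{g}$ and $\pi^{us}_{g}$, vary continuously with $g$ on the relevant small ball $B(x)$.

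\medskip

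Next I would set up the comparison. Fix the witness point $w$ and the two holonomy images for $f$. For $g$ close to $f$ in the $C^{1}$-topology, the perturbed circuit starting near $w$ produces a point $z_{g}$ with $\pi^{su}_{g}(w_{g})$ close to $x$ and $\pi^{us}_{g}(w_{g})=z_{g}$ close to $z$; by continuity of the holonomies, $\mathrm{dist}(z_{g},\pi^{su}_{g}(w_{g}))>\delta$ once $g$ is sufficiently close to $f$. This shows $F_{\eps,g}(\pi^{su}_{g}(w_{g}))$ contains a point at distance $>\delta$ from its basepoint, hence is not a singleton, for all $g$ in some $C^{1}$-neighborhood $\mathcal{U}(f)$. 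To get the conclusion for all $y$ in a neighborhood $U(x)$ rather than just at the one moving basepoint, I would invoke Lemma~\ref{c.homogeneous}: the maps $h$ provided there carry the witnessing $su$-path at one point to a witnessing $su$-path at a nearby point, transporting the non-triviality of $F_{\eps}$ across a whole center-transverse neighborhood, with the transported diameter controlled below by continuity of $h$. The same continuity estimate, applied uniformly over the compact neighborhood $\overline{U(x)}$ and the $C^{1}$-neighborhood $\mathcal{U}$, yields a single $\mu>0$ with $\mathrm{diam}(F_{\eps,g}(y))>\mu$ for all $g\in\mathcal{U}$ and all $y\in U(x)$.

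\medskip

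The main obstacle I expect is the uniformity, both in the basepoint $y$ and in the perturbation $g$: continuity of foliations alone gives an open condition pointwise, but the quantitative diameter bound $\mu$ requires that the size of the holonomy discrepancy not collapse to zero along sequences $g_{n}\to f$ or $y_{n}\to x$. I would handle this by a compactness argument, extracting the bound from the uniform (not merely pointwise) $C^{0}$-convergence of the strong foliations on the compact ball $\overline{B(x)}$ and the uniform continuity of the transport maps $h$ over the compact set $\overline{U(x)}$; the positive discrepancy $\delta$ at the single base configuration then propagates to a uniform $\mu$ because all the approximations involved are uniform on compacta. A secondary technical point is ensuring $\eps$ stays fixed and admissible for all $g\in\mathcal{U}$, which again follows from the uniform size estimates on local strong leaves under $C^{1}$-small perturbations.
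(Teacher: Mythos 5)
Your overall strategy is the same as the paper's: fix a witness $w$ with $\pi^{su}_{f}(w)=x$ and $z=\pi^{us}_{f}(w)\ne x$, use continuity of $(y,g)\mapsto (W^{s}_{g}(y),W^{u}_{g}(y))$ in the $C^{1}$ topology to make this configuration persist, and get the uniform $\mu$ from a compactness/uniform-continuity argument. The persistence in $g$ at the moving basepoint $\pi^{su}_{g}(w)$ and the diameter estimate at that point are fine.

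The gap is the step where you pass from that single moving basepoint to \emph{all} $y$ in a fixed neighborhood $U(x)$ by invoking Lemma~\ref{c.homogeneous}. That lemma only produces a homeomorphism $h:U_{x}\to U_{y}$ when $y\in AC(x)$: its map $h$ is built from stable/unstable holonomies along an $su$-path joining $x$ to $y$ inside their common accessibility class. The points $y\in U(x)$ you must treat are arbitrary nearby points, and in general they do \emph{not} lie in $AC(x)$; indeed, if a whole neighborhood of $x$ lay in $AC(x)$, then $AC(x)$ would be open, which is exactly the situation one is not in for the intended application (Proposition~\ref{perturbacion.clases}). So Lemma~\ref{c.homogeneous} cannot transport the witness to such $y$, and as written this step fails. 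The repair is cheap and uses only ingredients you already have: fix a ball $B_{\rho}(z)\subset D_{x}$ with $x\notin \overline{B_{\rho}(z)}$. By the same joint continuity of the foliations (equivalently, openness of $\pi^{su}_{g}$ near $w$, uniformly in $g$), for every $y$ near $x$ and every $g$ $C^{1}$-close to $f$ there is $w'$ near $w$ with $\pi^{su}_{g}(w')=y$, and then $\pi^{us}_{g}(w')\in B_{\rho}(z)$. Hence $F_{\eps,g}(y)$ meets $B_{\rho}(z)$, so $F_{\eps,g}(y)\ne\{y\}$; shrinking $U(x)$ so that it is disjoint from $B_{\rho}(z)$ gives $\mathrm{diam}\,(F_{\eps,g}(y))\geq d(U(x),B_{\rho}(z))=:\mu>0$ simultaneously. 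This is precisely the paper's proof; the correct in-paper analogue of your ``transport the witnessing path to nearby basepoints'' mechanism is the pushing argument of Lemma~\ref{lema.arco.conexo} (combined with continuity in $g$), not Lemma~\ref{c.homogeneous}.
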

\begin{proof}
If $F_{\eps,f}(x)\ne\{x\}$, then there exists $z\in D_{x}$, $z\ne x$, such that $z\in \pi^{su}_{f}((\pi^{su})^{-1}(x))$. Let $B_{\rho}(z)\subset D_{x}$ be such that $x\notin B_{\rho}(z)$. Due to the continuity of $(x,f)\mapsto (W^{s}_{f}(x), W^{u}_{f}(x))$, there exist an open neighborhood $U(x)$ of $x$ and a $C^{1}$-neighborhood $\mathcal{U}(f)$ of $f$, such that $F_{\eps,g}(y)=\pi^{us}_{g}((\pi^{su})^{-1}(y))\cap B_{\rho}(z)\ne\emptyset$ for all $g\in\mathcal{U}$ and all $y\in U(x)$; therefore, $F_{\eps,g}(y)\ne\{y\}$ for all $g\in\mathcal{U}$ and all $y\in U(x)$. Also $\text{diam}\, F_{\eps,g}(y)>\mu>0$ for all $g\in\mathcal{U}$ and $y\in U(x)$, due to continuity.
 \end{proof}

Let us finish by putting all these concepts together. 
\begin{proof}[Proof of Proposition \ref{perturbacion.clases}] Let $K\subset \Gamma(f)$ be a minimal set as in the hypotheses. Then, by Lemma \ref{F.eps.no.vacio} there exists a sequence, call it $f_{n}\to f$, such that $f_{n}|_{K}=f|_{K}$ and $F_{f_{n},\eps}(x)\ne\{x\}$ for some $x_{n}\in K$. Now, by Lemma \ref{persistencia.F.eps}, there exists $\mu_{n}>0$ and an open neighborhood $U(x_{n})$ of $x_{n}$ such that $\text{diam}\, F_{\eps,f_{n}}(y)>\mu_{n}>0$ for all $y\in U(x_{n})$. Since $f_{n}|_{K}=f|_{K}$, $K$ is also a minimal set for $f_{n}$ for every $n$. Therefore, every $f_{n}$-orbit is dense in $K$. This implies that the $f_{n}$-orbit of every $\xi\in K$ enters the neighborhood $U(x_{n})$ and hence $\text{diam}\, F_{\eps,f_{n}}(\xi)>\mu_{n}>0$ for every $\xi$ in $K$. This implies that the stable and unstable manifolds of $\xi$ are not jointly integrable at any point of $K$. \par
So, $K\cap \Gamma(f_{n})=\emptyset$. In particular, $AC(\xi)$ cannot be a codimension-two immersed manifold for any $\xi\in K$. Now, if there were $\xi\in K$ such that $AC(\xi)$ is open then, by minimality, all $AC(\xi)$ with $\xi\in K$ would be open, by the comment right after Proposition \ref{perturbacion.clases}. So, if we assume there is an accessibility class that is not open in $K$, then for every point $\xi\in K$, $AC(\xi)$ has to be a codimension-one immersed manifold, as a result of Theorem \ref{mteo:class.manifold}.  
\end{proof}

\bibliographystyle{ijmart}

\end{document}